\newcommand{\GL}{\text{\textup{GL}}}
\newcommand{\Hom}{\text{\textup{Hom}}}
\newcommand{\tr}{\text{\textup{tr}}}
\newcommand{\vol}{\text{\textup{vol}}}
\newcommand{\Id}{\text{\textup{Id}}}
\newcommand{\spec}{\text{\textup{spec}}}
\newcommand{\Ad}{\operatorname{Ad}}
\def\imod#1{\allowbreak\mkern10mu({\operator@font mod}\,\,#1)}
\theoremstyle{plain}
  \newtheorem{lemma}{Lemma}[section]
  \newtheorem{satz}[lemma]{Theorem}
  \newtheorem{propo}[lemma]{Proposition}
\theoremstyle{definition}
  \newtheorem{defn}[lemma]{\upshape Definition}
\title{A trace formula for non-unitary representations of a uniform lattice}
\author{Anton Deitmar \and Frank Monheim}
\date{}
\newcommand{\Addresses}{{% additional braces for segregating \footnotesize
  \bigskip
  \footnotesize

  Anton Deitmar\\
  \textsc{Mathematisches Institut\\
  Auf der Morgenstelle 10\\
  72076 T\"ubingen\\
  Germany}\par\nopagebreak
   \texttt{deitmar@uni-tuebingen.de}

  \medskip

  Frank Monheim\\
  \textsc{Mathematisches Institut\\
  Auf der Morgenstelle 10\\
  72076 T\"ubingen\\
  Germany}\par\nopagebreak
  \texttt{frank.monheim@uni-tuebingen.de}

}}
\begin{document}

\maketitle
\begin{abstract}
	 In this work we shall generalize the Selberg trace formula to a non-unitary finite-dimensional complex representation $\chi: \Gamma \rightarrow \GL(V)$ of a uniform lattice $		\Gamma$ of a real Lie group $G$. 
\end{abstract}
 
\tableofcontents

\section{Introduction}
	Given a Lie group $G$, a uniform lattice $\Gamma \subset G$ and a finite dimensional unitary representation $\chi : G \rightarrow \GL(V)$, consider the Hilbert space 			$L^2(\Gamma\backslash G ,\chi)$ of measurable functions $f: G\rightarrow V$ that satisfy $f(\gamma g) = \chi(\gamma)f(g)$ for all $\gamma\in\Gamma$ and almost 			everywhere in $g\in G$, with the extra condition, that
	\begin{equation*}
  		\int_{\Gamma\backslash G} \langle f(g), f(g) \rangle\;dg < \infty. 
	\end{equation*}
	The right-regular representation on $L^2(\Gamma\backslash G,\chi)$ is easily seen to be unitary, and it is known that the right-regular representation  decomposes discretely,
	\begin{equation}\label{directdecomp}
  		R = \widehat{\bigoplus}_{\pi\in \widehat{G}}m(\pi)\pi,
	\end{equation}
	with finite multiplicities $m(\pi)\in \mathbb{N}$. This decomposition allows us to derive the Selberg trace formula: For $f\in C^\infty_c(G)$ we consider the operator $R(f)$ on $L^2(\Gamma\backslash G,\chi)$ given by
	\begin{equation*}
  		R(f)\varphi(x) = \int_G f(g)\varphi(xg)\;dx,
	\end{equation*}
	where $\varphi\in L^2(\Gamma\backslash G,\chi)$ and $dx$ is a $G$-invariant Radon measure on $\Gamma\backslash G$. The operator $R(f)$ is an integral operator, with integral 	kernel given by 
	\begin{equation*}
  		k_f(x,y) = \sum_{\gamma\in\Gamma} f(x^{-1}\gamma y)\chi(\gamma). 
	\end{equation*}
	The operator $R(f)$ is of trace class and the trace can be computed as the integral
	\begin{equation*}
  		\tr\; R(f) = \int_{\Gamma\backslash G} \sum_{\gamma\in\Gamma} f(x^{-1}\gamma x)\tr\;\chi(\gamma)\;dx.
	\end{equation*}
	After some computation and reordering this sum with respect to the conjugacy classes $[\gamma]$ of $\Gamma$ one obtains
	\begin{equation}\label{geometricside}
		\tr\; R(f) = \sum_{[\gamma]}\int \vol(\Gamma_\gamma\backslash G_\gamma)\mathcal{O}_\gamma(f) \tr\;\chi(\gamma).
	\end{equation}
	In the above expression $\Gamma_\gamma, G_\gamma$ is the centralizer of $\gamma$ in $\Gamma$ and $G$ respectively. The orbital integral $\mathcal{O}_\gamma(f)$ is 	given by
	\begin{equation*}
		\mathcal{O}_\gamma(f) = \int_{G_\gamma\backslash G}f(x^{-1}\gamma x)\;dx. 
	\end{equation*} 
	On the other hand we can compute the trace of $R(f)$ using the decomposition in (\ref{directdecomp}) and obtain
	\begin{equation}\label{spectralside}
		\tr\;R(f) = \sum_{\pi\in\widehat{G}}m(\pi)\tr\;\pi(f). 
	\end{equation}
	Thus, by equating (\ref{geometricside}) and (\ref{spectralside}) one obtains the Selberg trace formula
	\begin{equation*}
		 \sum_{\pi\in\widehat{G}}m(\pi)\tr\;\pi(f) = \sum_{[\gamma]}\int \vol(\Gamma_\gamma\backslash G_\gamma)\mathcal{O}_\gamma(f) \tr\;\chi(\gamma),
	\end{equation*}
	which connects spectral data (left-hand side) to geometric data (right-hand side) of $\Gamma\backslash G$. \\

	In contrast to the above explanations, we will in this work allow $\chi : G\rightarrow \GL(V)$ to be an arbitrary complex and finite dimensional representation, not necessarily 	unitary. If we follow the approach, which we have sketched for a unitary representation $\chi$, the first obstacle appearing, is the question how to define $L^2(\Gamma			\backslash G,\chi)$. A satisfying answer will be given in Definition \ref{hilbertspace}. But nevertheless, we will lose the unitarity of the right-regular representation $R$ and 		consequently, a decomposition (\ref{directdecomp}) as in the unitary case will in general not hold. 
	But under the conditions, that will be formulated in the subsequent section, it is shown in Proposition \ref{l2filtration}  that the right-regular representation on $L^2(\Gamma		\backslash G,\chi)$ admits an increasing and exhausting filtration
	\begin{equation}\label{fil}
    		0 = V_0 \subset V_1\subset \dots \subset \bigcup_{i=0}^\infty V_i = L^2(\Gamma\backslash G,\chi),
  	\end{equation}
	such that the natural representation on $V_i/V_{i-1}$ is admissible and irreducible. 
	
	The above filtration gives the means to derive a trace formula in Theorem 			
	\ref{traceformula}. The computation of the geometric side is completely analogous to the unitary case, whereas the spectral side can be computed using the filtration (\ref{fil}). 	We use the fact, that the trace of an operator on $L^2(\Gamma\backslash G,\chi)$, which is compatible with the filtration, can be computed by summing the individual traces of 	the induced operators on the graded parts $V_i/V_{i-1}$.

\section{Preliminaries} 

In this work, let $G$ be a connected, semisimple Lie-group with finite center and $K$ a maximal compact subgroup of $G$. Let $\Gamma\subset G$ be a torsion free, uniform lattice. Fix a Haar measure $dg$, which is both a left- and right Haar measure, due to the existence of the uniform lattice $\Gamma$.  We let $X := \Gamma\backslash G$ be the compact quotient space with $G$-invariant measure $dx$, such that for $f\in C_c(G)$ the integral formula
\begin{equation*}
  \int_G f(g)\;dg = \sum_{\gamma\in\Gamma} \int_X f(\gamma x)\;dx
\end{equation*}
holds.

\begin{defn}
  \begin{itemize}
    	\item For a complex vector space $V$ we let $\GL(V)$ be the group of all automorphisms of $V$. A \emph{representation} $(\pi,V)$ of $G$ is a group homomorphism $\pi : G  
    	\rightarrow \GL(V)$.

	\item	For a complex Hilbert space we let $\GL(H)$ be the group of all bijective and bounded endomorphisms of $H$. A \emph{(continuous) representation} $(\pi,H)$ of $G$ is 	a group homomorphism $\pi: G\rightarrow \GL(H)$, such that the map

	\begin{equation*}
  		\begin{split}
    		G\times H &\rightarrow H,\\
    		(g,v)&\mapsto \pi(g)v,
 		 \end{split}
	\end{equation*}

	is continuous. A continuous representation $\pi$ is said to be \emph{admissible} if $\pi$ restricted to $K$ is unitary and each $\tau\in \widehat{K}$ occurs with only finite 		multiplicity. When the underlying representation space is a Hilbert space we will always mean a continuous representation without mentioning it anymore. 

	\item A \emph{(Lie algebra) representation} $(\pi,V)$ of the Lie algebra $\mathfrak{g}$ of $G$ is a complex vector space $V$ together with a Lie algebra homomorphism

	\begin{equation*}
  	\begin{split}
    		\mathfrak{g} & \rightarrow \mathfrak{gl}(V),\\
    		X &\mapsto \pi(X).
 	 \end{split}
	\end{equation*}

	\item A $(\mathfrak{g},K)$-module is a vector space $V$ which is both a Lie algebra representation of $
	\mathfrak{g}$ and a group representation of $K$ such that the representations are compatible in the following way:
	\begin{enumerate}
   		\item for any $v\in V$, $k\in K$ and $X\in\mathfrak{g}$
     		\begin{equation*}
       		k\cdot (X\cdot v) = (\Ad(k)X)\cdot(k\cdot v),
     		\end{equation*}
   		\item for any $v\in V$ and $Y\in\mathfrak{k}$
     		\begin{equation*}
       			\left.\left(\frac{d}{dt}\exp(tY)\cdot v\right)\right|_{t=0} = Y\cdot v.
     		\end{equation*}
	\end{enumerate}
	The third condition ensures $K$-finiteness:
	\begin{enumerate}
  		\item[3.] for any $v\in V$ the set $Kv$ spans a finite-dimensional subspace of $V$. 
	\end{enumerate}
	Recall that the $K$-finite vectors of an admissible representation $(\pi,V)$ give rise to a $(\mathfrak{g},K)$-module. Two admissible representations $\pi$ and $\eta$ are 		called \emph{equivalent}, if the associated $(\mathfrak{g},K)$-modules are isomorphic. 

	\item The \emph{unitary dual} $\widehat{G}$ of the group $G$ is the set of all irreducible unitary representations modulo unitary equivalence. 

	\item The \emph{admissible dual} $\widehat{G}_{\text{adm}}$ of $G$ is the set of all irreducible admissible representations module admissible equivalence. 
 	\end{itemize}
\end{defn}

In the following $\chi$ will always be a finite dimensional complex representation of $\Gamma$, not necessarily unitary, with representation space $V=V_\chi$.  
Let $E=E_\chi$ be the associated vector bundle over $\Gamma\backslash G$. More precisely, we consider the bundle $\Gamma\backslash (G\times V)$, where $\Gamma$ acts on $G\times V$ via 	
	$$\gamma\cdot(g,v)=(\gamma g, \chi(\gamma)v).$$
The image of $(g,v)$ under the canonical projection $ G\times V\rightarrow E$ will be written as $\Gamma(g,v)$. Note, that $\Gamma(\gamma g,v) = \Gamma(g,\chi(\gamma^{-1})v)$. Furthermore the group $G$ acts on $E$ via 
\begin{equation*}
  g\cdot \Gamma(h,v) = \Gamma(hg^{-1},v). 
\end{equation*}
There is a canonical identification of the smooth sections $\Gamma^\infty(X,E)$ of the bundle $E$ with the set of functions
\begin{equation*}
  C^\infty(G,V)^\Gamma = \{f\in C^\infty(G,V): f(\gamma g) = \chi(\gamma)f(g) \text{ for all } \gamma\in\Gamma,g\in G\},
\end{equation*} 
and we will freely switch between these two interpretations. 

\begin{defn}\label{hilbertspace}
We choose any smooth hermitian metric $\langle\cdot,\cdot\rangle$ on $E$. If we choose a Haar-measure $dk$ on $K$ we can form the integral
\begin{equation*}
  \int_K\left\langle \Gamma(gk,v),\Gamma(gk,w)\right\rangle_{\Gamma gk}\;dk.
\end{equation*}
 This gives again a smooth hermitian fibre metric on $E$, which is $K$-equivariant and by replacing $\left\langle\cdot,\cdot\right\rangle$ with this one we can assume, that 
\begin{equation*}
  \left\langle \Gamma(gk,v),\Gamma(gk,w)\right\rangle_{\Gamma gk} = \left\langle \Gamma(g,v),\Gamma(g,w)\right\rangle_{\Gamma g}
\end{equation*}
for arbitrary $g\in G,k\in K$ and $v,w\in V$. 

Together with this smooth $K$-equivariant metric we obtain a pre-Hilbert space structure on the set of smooth sections $\Gamma^\infty(X,E)$  via 
\begin{equation*}
  (f,g) := \int_{\Gamma\backslash G} \left\langle f(x),g(x) \right\rangle_x\;dx. 
\end{equation*}
We complete $\Gamma^\infty(X,E)$ with respect to the induced norm and  obtain the Hilbert space of square integrable sections $L^2(X,E)$. The definition of $L^2(X,E)$ is independent of the chosen smooth fibre metric on $E$, since by compactness of $\Gamma\backslash G$ and the finite-dimensionality of the fibres, another smooth metric induces an equivalent norm on $\Gamma^\infty(X,E)$. 
\end{defn}

\begin{defn}
On $\Gamma^\infty(X,E)$ we define the \emph{right regular representation} $R$ of $G$ as
\begin{equation*}
  R(g)f(x) := g\cdot f(xg),
\end{equation*}
where $f\in \Gamma^\infty(X,E), x\in X$ and $g\in G$. \\

If we use the identification of $\Gamma^\infty(X,E)$ with $C^\infty(G,V_\chi)^\Gamma$ the right regular representation for elements $f\in C^\infty(G,V_\chi)^\Gamma$ is just given by
\begin{equation*}
  R(g)f(x) = f(xg).
\end{equation*}
\end{defn}

In the following paragraph we will show, that the right regular representation $R$ is continuous. We start out with the following proposition.

\begin{propo}\label{continuousprop}
  There exists a continuous function $\psi$ on $G$, such that
  \begin{equation*}
    \left\langle \Gamma(gh,v), \Gamma(gh,v)\right\rangle_{\Gamma gh} \leq \psi(h)\left\langle \Gamma(g,v),\Gamma(g,v)\right\rangle_{\Gamma g}, 
  \end{equation*}
  for arbitrary $g,h\in G$ and $v\in V$. 
  In particular, there exists for each compact set $C\subset G$ a constant $M$ depending only on $C$, such that $\left\langle\Gamma(gh,v),\Gamma(gh,v) \right\rangle_{\Gamma gh} \leq M\left\langle \Gamma(g,v),\Gamma(g,v)\right\rangle_{\Gamma g}$, for all $h\in C$ and arbitrary $g\in G,v\in V$.   
\end{propo}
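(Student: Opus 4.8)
The plan is to exploit the $K$-invariance of the chosen metric to reduce the estimate to a statement about a fixed compact fundamental domain, and then use the cocycle-type behaviour of the metric along the group. First I would fix a relatively compact open fundamental domain $F\subset G$ for the left $\Gamma$-action, so that $G=\bigcup_{\gamma\in\Gamma}\gamma F$ and $\overline F$ is compact. Since the metric is $K$-invariant by the normalization in Definition \ref{hilbertspace} and the section $\Gamma\mapsto \langle\Gamma(g,v),\Gamma(g,v)\rangle_{\Gamma g}$ depends only on $\Gamma g$, it suffices to control the ratio
\begin{equation*}
\frac{\langle \Gamma(g'h',v),\Gamma(g'h',v)\rangle}{\langle \Gamma(g',v),\Gamma(g',v)\rangle}
\end{equation*}
as $g'$ and $h'$ range over compact sets, after translating $g$ into $\overline F$ on the left by a suitable $\gamma$ and using $\Gamma(\gamma g,v)=\Gamma(g,\chi(\gamma^{-1})v)$.

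Next I would make the dependence on $v$ uniform. For fixed $g\in G$ the map $v\mapsto \langle\Gamma(g,v),\Gamma(g,v)\rangle_{\Gamma g}$ is a (positive-definite) hermitian form on the finite-dimensional space $V$; call its Gram matrix, relative to a fixed basis of $V$, the positive-definite matrix $A(g)$. The quantity to be bounded is then $v^{*}A(gh)v \le \psi(h)\, v^{*}A(g)v$ for all $v$, which is implied by the operator inequality $A(gh)\le \psi(h)\,A(g)$, and this in turn follows from a bound on $\|A(g)^{-1/2}A(gh)A(g)^{-1/2}\|$. The matrix-valued function $g\mapsto A(g)$ is smooth on $G$ and left-$\Gamma$-equivariant in the sense $A(\gamma g)=\chi(\gamma)^{-*}A(g)\chi(\gamma)^{-1}$, but crucially the combination $A(g)^{-1/2}A(gh)A(g)^{-1/2}$ — or rather its operator norm — descends to a function of $\Gamma g$ and $h$: writing $\gamma g = g'$ with $g'\in\overline F$, the conjugation by $\chi(\gamma)^{-1}$ cancels, so $\|A(g)^{-1/2}A(gh)A(g)^{-1/2}\| = \|A(g')^{-1/2}A(g'h)A(g')^{-1/2}\|$ with $g'\in\overline F$.

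I would then define $\psi(h):=\sup_{g'\in\overline F}\big\|A(g')^{-1/2}A(g'h)A(g')^{-1/2}\big\|$. Since $A$ is smooth (hence continuous) and positive-definite, $A^{-1/2}$ is continuous, so $(g',h)\mapsto \|A(g')^{-1/2}A(g'h)A(g')^{-1/2}\|$ is continuous on $\overline F\times G$; taking the supremum over the compact set $\overline F$ yields a continuous (indeed finite) function $\psi$ on $G$, by the standard fact that the supremum of a continuous function over one compact factor is continuous in the remaining variable. By construction $A(gh)\le \psi(h)A(g)$ for every $g$, which is exactly the claimed inequality. The final assertion about a compact $C\subset G$ then follows immediately by setting $M:=\sup_{h\in C}\psi(h)<\infty$.

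The main obstacle I anticipate is purely bookkeeping rather than conceptual: one must be careful that the reduction "pull $g$ back into $\overline F$" is compatible with the right translation by $h$, i.e. that after writing $\gamma g=g'$ the argument $gh$ becomes $\gamma^{-1}g'h$ and the left-$\Gamma$-equivariance of $A$ together with the conjugation-invariance of the operator norm genuinely cancels the $\chi(\gamma)$ factors. Once that cancellation is checked, continuity of $\psi$ is routine. (If one prefers to avoid the operator-norm/square-root manipulation, an alternative is to cover $\overline F$ by finitely many charts, estimate $A(g'h)$ against $A(g')$ directly using uniform continuity of $A$ on slightly enlarged compact sets and a compactness/partition argument in $h$, but the Gram-matrix formulation above is cleaner.)
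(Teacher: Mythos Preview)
Your argument is correct and is essentially the same idea as the paper's proof: both recognise that the ratio $\langle\Gamma(gh,v),\Gamma(gh,v)\rangle_{\Gamma gh}\big/\langle\Gamma(g,v),\Gamma(g,v)\rangle_{\Gamma g}$ descends to a continuous function on $(\Gamma\backslash G)\times G$ and define $\psi(h)$ as its maximum over the compact first factor. The only difference is packaging. You work in a compact fundamental domain with Gram matrices $A(g)$ and the quantity $\lVert A(g)^{-1/2}A(gh)A(g)^{-1/2}\rVert$; the paper instead introduces, for each $h$, a continuous section $A_h\in\Gamma(X,\Hom(E,E))$ with $\langle\Gamma(gh,v),\Gamma(gh,v)\rangle_{\Gamma gh}=\langle\Gamma(g,A_h(\Gamma g)v),\Gamma(g,A_h(\Gamma g)v)\rangle_{\Gamma g}$ and sets $\psi(h)=\max_{\Gamma g\in\Gamma\backslash G}\lVert A_h(\Gamma g)\rVert_{\Gamma g}^2$. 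The paper's bundle formulation is a bit cleaner because the $\Gamma$-invariance is built in from the start, so the ``cancellation of $\chi(\gamma)$'' bookkeeping you flag never arises; note also that the $K$-invariance of the metric plays no role in either argument for this proposition.
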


\begin{proof}
  For every $h\in G$, there exists a contionuous section $A_h \in \Gamma(X,\Hom(E,E))$, such that for $v\in V$ we have
  \begin{equation*}
    \left\langle\Gamma(gh,v),\Gamma(gh,v)\right\rangle_{\Gamma gh} = \left\langle\Gamma(g,A_h(\Gamma g)v),\Gamma(g,A_h(\Gamma g)v)\right\rangle_{\Gamma g}.
  \end{equation*}
  As the metric is smooth, the dependence of $A_h$ on $h$ is smooth, in particular continuous.
  We let 
  \begin{equation*}
    \|A_h(\Gamma g)\|_{\Gamma g}^2 = \sup_{v\neq 0}\frac{\left\langle\Gamma(g,A_h(\Gamma g)v),\Gamma(g,A_h(\Gamma g)v)\right\rangle_{\Gamma g}}{\left\langle\Gamma(g,v),\Gamma(g,v)\right\rangle_{\Gamma g}}.
   \end{equation*}
    Then we define $\psi(h) = \max_{\Gamma g \in \Gamma\backslash G}\|A_h(\Gamma g)\|^2_{\Gamma g}$ which satisfies the conditions of the proposition. 
\end{proof}

We choose once and for all a representative $(\tau,V_\tau)$ for each class in $\widehat{K}$, the unitary dual of $K$. For a unitary representation $(\pi, V)$ of $K$ we let $V(\tau)$ be the $\tau$-isotype. Recall the following theorem:

\begin{satz}\emph{\cite[Theorem 7.3.2.]{prohadeitmar}}
  For $(\pi,V)$ a unitary representation of $K$ the representation space is a direct Hilbert space sum of all the $K$-isotypes:
  \begin{equation*}
    V = \widehat{\bigoplus}_{\tau\in\widehat{K}} V(\tau). 
  \end{equation*}
\end{satz}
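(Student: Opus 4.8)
This is the classical isotypic decomposition of a unitary representation of a compact group, so I would run the standard argument built on averaging over $K$ and the spectral theorem for compact operators. For each $\tau\in\widehat K$ let $V(\tau)$ be the closure of the span of all finite-dimensional, $\pi$-invariant, irreducible subspaces of $V$ that are isomorphic as $K$-representations to $V_\tau$. The assertion then reduces to two points: the subspaces $V(\tau)$, $\tau\in\widehat K$, are pairwise orthogonal, and their closed linear span is all of $V$. Granting these, $V$ is the Hilbert space direct sum of the $V(\tau)$ by the general theory of orthogonal families of closed subspaces.

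For pairwise orthogonality I would use that, $\pi$ being unitary, the orthogonal projection onto any closed $\pi$-invariant subspace is $K$-equivariant. Hence if $M\subseteq V$ is irreducible of type $\tau$ and $N\subseteq V$ is irreducible of type $\sigma\not\cong\tau$, the orthogonal projection onto $M$ restricts to a $K$-intertwiner $N\to M$, which vanishes by Schur's lemma; so $N\perp M$, and passing to closed spans yields $V(\tau)\perp V(\sigma)$ for $\tau\neq\sigma$.

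The substantive part is the density of $\bigoplus_{\tau}V(\tau)$. Put $W:=\overline{\bigoplus_{\tau\in\widehat K}V(\tau)}$ and suppose $W^{\perp}\neq 0$; choose $0\neq v_0\in W^{\perp}$. Since $\pi$ is unitary, $W^{\perp}$ is a closed $\pi$-invariant subspace, and I would produce inside it a nonzero finite-dimensional irreducible subrepresentation, contradicting the very definition of $W$. Consider the operator
\begin{equation*}
  T:=\int_K\big\langle\,\cdot\,,\pi(k)v_0\big\rangle\,\pi(k)v_0\;dk,
\end{equation*}
a weak integral of rank-one operators. It is self-adjoint and positive; it satisfies $\langle Tv_0,v_0\rangle=\int_K|\langle v_0,\pi(k)v_0\rangle|^2\,dk>0$, so $Tv_0\neq 0$; a substitution in the Haar integral gives $\pi(k')T\pi(k')^{-1}=T$ for all $k'\in K$; and since every $\pi(k)v_0$ lies in $W^{\perp}$, so does the image of $T$. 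The key point is that $T$ is compact: the integrand $k\mapsto\langle\,\cdot\,,\pi(k)v_0\rangle\pi(k)v_0$ is norm-continuous from $K$ into the finite-rank operators — here one uses exactly the continuity of $k\mapsto\pi(k)v_0$ — so $T$ is an operator-norm limit of Riemann sums, each a finite sum of rank-one operators and hence compact, and the compact operators form a norm-closed ideal. Restricting $T$ to $W^{\perp}$ we obtain a nonzero compact self-adjoint operator commuting with the unitary $K$-action; by the spectral theorem it has a nonzero eigenvalue whose eigenspace $E\subseteq W^{\perp}$ is finite-dimensional, nonzero and $K$-invariant. Any irreducible $K$-subrepresentation of $E$ is a copy of some $V_\tau$ inside $W^{\perp}$, contradicting $V(\tau)\subseteq W$. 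Hence $W^{\perp}=0$ and $V=\widehat{\bigoplus}_{\tau}V(\tau)$.

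The step I expect to require the most care is the compactness of $T$: one has to check that a Bochner integral over the compact group $K$ of a norm-continuous family of compact (here rank-one) operators is again compact, which rests on the norm-closedness of the compacts together with approximation by Riemann sums. The remaining ingredients — unitarity forcing invariance of orthogonal complements and $K$-equivariance of projections, Schur's lemma, and the spectral theorem for compact self-adjoint operators — are standard and enter only in routine ways. If one prefers, an equivalent route is to prove the theorem first for $V=L^2(K)$ (where convolution operators are Hilbert--Schmidt, hence compact) and then transfer it to a general $V$ via the intertwiner $f\mapsto\int_Kf(k)\pi(k)v_0\,dk$ from $L^2(K)$ to $V$ together with an approximate-identity argument; the essential content is the same.
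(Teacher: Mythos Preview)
The paper does not give its own proof of this theorem; it is quoted verbatim with a citation to \cite[Theorem 7.3.2]{prohadeitmar} and then used as a black box. There is therefore nothing to compare your argument against in the present paper.

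That said, your argument is correct and is exactly the classical Peter--Weyl style proof one finds in the cited reference: orthogonality of isotypes via Schur's lemma, and density by producing a nonzero compact self-adjoint $K$-intertwiner on a hypothetical nonzero complement, whose finite-dimensional eigenspaces then furnish an irreducible $K$-subspace. Your identification of the compactness of $T$ as the point deserving care is apt, and your justification (operator-norm approximation by finite-rank Riemann sums over the compact group) is the standard one. Nothing is missing.
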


\begin{propo}
  The right regular representation $R$ on $\Gamma^\infty(X,E)$ is continuous with respect to the $L^2$-topology. In particular it extends to a continuous representation of $L^2(X,E)$. The restriction of $R$ to the maximal compact subgroup $K$ is unitary and hence we get a $K$-isotypical decomposition
  \begin{equation*}
    L^2(X,E) = \widehat{\bigoplus}_{\tau\in\widehat{K}}L^2(X,E)(\tau).
  \end{equation*}
  \end{propo}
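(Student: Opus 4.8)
The plan is to prove the three claims in sequence. First I would show that each $R(g)$ extends to a bounded operator on $L^2(X,E)$ whose operator norm is bounded uniformly on compact subsets of $G$; this is the one place where non-unitarity of $\chi$ really enters, and it is exactly what Proposition \ref{continuousprop} is there to supply. Concretely, for $f\in\Gamma^\infty(X,E)$ viewed as an element of $C^\infty(G,V)^\Gamma$, the section $R(h)f$ is the function $g\mapsto f(gh)$, so $\|R(h)f\|^2=\int_{\Gamma\backslash G}\langle\Gamma(g,f(gh)),\Gamma(g,f(gh))\rangle_{\Gamma g}\,dx$. Applying Proposition \ref{continuousprop} with $g$ replaced by $gh$ and $h$ replaced by $h^{-1}$ bounds $\langle\Gamma(g,v),\Gamma(g,v)\rangle_{\Gamma g}$ by $\psi(h^{-1})\langle\Gamma(gh,v),\Gamma(gh,v)\rangle_{\Gamma gh}$ for all $g,v$; taking $v=f(gh)$, noting that $\Gamma g\mapsto\langle\Gamma(gh,f(gh)),\Gamma(gh,f(gh))\rangle_{\Gamma gh}$ is a well-defined function on $\Gamma\backslash G$, and substituting $\Gamma g\mapsto\Gamma g h$ under the $G$-invariant measure gives $\|R(h)f\|\le\psi(h^{-1})^{1/2}\|f\|$. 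Continuity of $\psi$ then yields a bound $\|R(h)\|\le M_C$ for $h$ in any fixed compact $C\subset G$, so $R(h)$ extends to a bounded operator on $L^2(X,E)$, and the relation $R(g_1g_2)=R(g_1)R(g_2)$ survives on $L^2(X,E)$ by density.

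Next I would establish strong continuity. On the dense subspace of smooth sections this is soft: in the bundle picture $(h,x)\mapsto(R(h)s)(x)=h\cdot s(xh)$ is continuous from $G\times X$ to $E$, hence $h\mapsto\|R(h)s-s\|^2=\int_X\langle(R(h)s-s)(x),(R(h)s-s)(x)\rangle_x\,dx$ is continuous on $G$ by compactness of $X$ and vanishes at $h=e$, so $R(h)s\to s$ in $L^2$ as $h\to e$. For general $\varphi\in L^2(X,E)$ I would approximate by a smooth $s$ with $\|\varphi-s\|<\varepsilon$ and use $\|R(h)\varphi-\varphi\|\le\|R(h)\|\,\|\varphi-s\|+\|R(h)s-s\|+\|s-\varphi\|$ together with the uniform bound on $\|R(h)\|$ near $e$, obtaining $R(h)\varphi\to\varphi$ as $h\to e$. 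Joint continuity of $(g,\varphi)\mapsto R(g)\varphi$ at an arbitrary point $(g_0,\varphi_0)$ then follows formally from $R(g_0h)=R(g_0)R(h)$, the local norm bound, and $\|R(g)\varphi-R(g_0)\varphi_0\|\le\|R(g)\|\,\|\varphi-\varphi_0\|+\|R(g)\varphi_0-R(g_0)\varphi_0\|$; thus $R$ is a continuous representation of $G$ on $L^2(X,E)$ extending the right regular representation on $\Gamma^\infty(X,E)$.

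For the restriction to $K$, the metric was arranged in Definition \ref{hilbertspace} to satisfy $\langle\Gamma(gk,v),\Gamma(gk,w)\rangle_{\Gamma gk}=\langle\Gamma(g,v),\Gamma(g,w)\rangle_{\Gamma g}$; feeding $v=w=f(gk)$ into this, integrating over $\Gamma\backslash G$, and using right $K$-invariance of $dx$ gives $\|R(k)f\|=\|f\|$ for smooth $f$, hence $R(k)$ is an isometry of $L^2(X,E)$, and since it is invertible with $R(k)^{-1}=R(k^{-1})$ it is unitary. So $R|_K$ is a unitary representation of the compact group $K$ on the Hilbert space $L^2(X,E)$, and the $K$-isotypical decomposition is then precisely the quoted isotype theorem \cite[Theorem 7.3.2.]{prohadeitmar} applied to it. I expect the only genuinely non-formal step to be the first one: because $\chi$ need not be unitary, $R(h)$ need not preserve the $L^2$-norm, and it is the locally uniform bound extracted from Proposition \ref{continuousprop} that allows the extension to $L^2(X,E)$ and the density argument for continuity to go through; with that bound in hand the remainder is routine functional-analytic bookkeeping.
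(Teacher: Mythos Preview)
Your proof is correct and follows essentially the same approach as the paper: you use Proposition~\ref{continuousprop} to obtain the locally uniform operator-norm bound (with the small clarification that one applies it to $h^{-1}$ and then changes variables $\Gamma g\mapsto\Gamma gh$, a step the paper compresses into a single inequality), and then deduce $K$-unitarity from the $K$-equivariance of the metric exactly as in the paper. The paper is terser on strong continuity, simply asserting that $g\mapsto R(g)f$ is continuous for smooth $f$ and that this suffices; your density/$3\varepsilon$ argument and reduction to continuity at $(e,\varphi_0)$ make explicit what the paper leaves to the reader.
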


\begin{proof}
  Let $C\subset G$ be a compact subset and $h\in C$. Let $M$ be as in Proposition \ref{continuousprop}. Then for $f\in \Gamma^\infty(X,E)$ we estimate
  \begin{align*}
    (R(h)f,R(h)f) & = \int_X \left\langle h\cdot f(xh), h\cdot f(xh)\right\rangle_x \;dx\\
                           & \leq \int_X M\left\langle f(x),f(x) \right\rangle_{x}\;dx\\
                           &= M(f,f),
  \end{align*}
  independent of $h\in C$. Hence the operator norm of $R(h)$ is uniformally bounded on each compact subset $C\subset G$. Since for fixed $f\in \Gamma^\infty(X,E)$ the map 
  \begin{align*}
    G & \rightarrow L^2(X,E)\\
    g & \mapsto R(g)f
  \end{align*} 
  is continous, the continuity of the right-regular representation $R$ follows. 
  The unitarity as a representation of $K$ is clear from the $K$-equivariance of the fibre metric: If we let $k\in K$ then we get
  \begin{align*}
  (R(k)f,R(k)f) & = \int_X \left\langle k\cdot f(xk), k\cdot f(xk)\right\rangle_x \;dx\\
                           & = \int_X \left\langle f(xk),f(xk)\right\rangle_{xk}\;dx\\
                           & = \int_X \left\langle f(x),f(x)\right\rangle_{x}\;dx\\
                           & = (f,f).
  \end{align*}
\end{proof}

\section{The spectral decomposition of the Casimir operator}
\begin{defn} 
  We let $\mathfrak{g}, \mathfrak{g}_\mathbb{C}, U(\mathfrak{g}_\mathbb{C}), Z(\mathfrak{g}_\mathbb{C})$ be the Lie-algebra of $G$, its complexification, the universal envelopping algebra, as well as its center, respectively. We endow $G/K$ with the $G$-invariant metric induced by the Killing form $\langle\cdot,\cdot\rangle$. The Killing form is non-degenerate, and hence it gives an identification of $\mathfrak{g}$ and its dual space $\mathfrak{g}^*$. If $X_1,\dots,X_n$ is a basis of $\mathfrak{g}$, then we define the \emph{Casimir element} as
  \begin{equation*}
    \Omega = Y_1X_1 +\dots Y_nX_n \in U(\mathfrak{g}_\mathbb{C}),
  \end{equation*}
  where $Y_1, \dots, Y_n$ is a dual basis of $X_1,\dots, X_n$ with respect to the Killing form. The definition of $\Omega$ is independent of the chosen orthonormal basis and $\Omega\in Z(\mathfrak{g}_\mathbb{C})$ \cite[Proposition 8.6.]{knapp}.
  
   If $\mathfrak{g}=\mathfrak{k}\oplus\mathfrak{p}$ is the Cartan decomposition of $\mathfrak{g}$ and $\theta$ is the Cartan involution on $\mathfrak{g}$ the map
  \begin{equation*}
    (X,Y) \mapsto - \langle X,\theta(Y)\rangle,
  \end{equation*}
  is a positive definite bilinear form.
   If $X_1,\dots, X_l$ is an orthonormal basis of $\mathfrak{k}$ and $Y_1,\dots,Y_k$ an orthonormal basis of $\mathfrak{p}$ with respect to the above bilinear form, we find
  \begin{align*}
    \Omega &=  -X^2_1 -\dots  -X^2_l + Y_1^2 + \dots + Y_k^2\\
    & = \Omega_K + Y_1^2 + \dots + Y_k^2,\\
  \end{align*}
  where $\Omega_K$ is the Casimir element of $U(\mathfrak{k}_\mathbb{C})$. 
\end{defn}

We will need the following results, to deduce a nice spectral decomposition of the Casimir operator on $L^2(X,E)$. 

\begin{satz}\label{shubin}\emph{\cite[Theorem 8.4.]{shubin}}
Let $M$ be a closed manifold and $D$  an elliptic differential operator on a metric bundle $\mathcal{E}$. If the resolvent set $\rho(D)\neq\emptyset$ is not empty, the spectrum $\sigma(D)$ is discrete and for each $\lambda\in\sigma(D)$ there exists a decomposition $L^2(M,\mathcal{E})=E_\lambda\oplus E'_\lambda$, such that
\begin{enumerate}
  \item $E_\lambda\subset \Gamma^\infty(M,\mathcal{E}), \dim E_\lambda<\infty$, and $E_\lambda$ is invariant under $D$ and there exists some $n>0$, such that $(D-\lambda)^nE_\lambda = 0$,
  \item $E'_\lambda$ is a closed subspace of $L^2(M,\mathcal{E})$ invariant under $D$. If we denote by $D_\lambda$ the restriction of $D$ to $ E'_\lambda$, then $\lambda\not\in\sigma(A_\lambda)$.  
\end{enumerate}
\end{satz}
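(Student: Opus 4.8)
The plan is to regard $D$ as an unbounded closed operator on $L^2 = L^2(M,\mathcal{E})$ and to reduce everything to the Riesz--Schauder theory of a compact resolvent. Write $m$ for the order of $D$ and let $H^s = H^s(M,\mathcal{E})$, $s\in\mathbb{R}$, denote the Sobolev scale of sections on the closed manifold $M$, so that $H^0 = L^2$, the embedding $H^s\hookrightarrow H^t$ is compact for $s>t$ (Rellich), and $\bigcap_s H^s = \Gamma^\infty(M,\mathcal{E})$. The facts about elliptic operators that I will use are standard: $D\colon H^{s+m}\to H^s$ is bounded for every $s$; elliptic regularity, namely that $u\in L^2$ with $(D-\mu)u\in H^s$ (in the distributional sense) forces $u\in H^{s+m}$, so that in particular every solution of $(D-\lambda)u=0$ is smooth; and, as a consequence, that $D$ with domain $H^m$ is a closed operator on $L^2$.

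First I would fix $\mu\in\rho(D)$, which exists by hypothesis, and set $R := (D-\mu)^{-1}$. Then $R$ is bounded from $L^2$ onto $H^m$, hence, composed with the compact embedding $H^m\hookrightarrow L^2$, it is a \emph{compact} operator on $L^2$. Riesz--Schauder theory for $R$ now yields: $\sigma(R)$ is at most countable with $0$ as its only possible accumulation point; each $\nu\in\sigma(R)\setminus\{0\}$ is an eigenvalue of finite algebraic multiplicity; and for each such $\nu$ there are an integer $N\geq 1$ and an $R$-invariant direct sum decomposition $L^2 = \ker(R-\nu)^N \oplus \operatorname{im}(R-\nu)^N$ into closed subspaces, the first of them finite-dimensional, such that $R$ restricted to $\operatorname{im}(R-\nu)^N$ has $\nu$ in its resolvent set. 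Because $D-\lambda = (D-\mu)\bigl(I-(\lambda-\mu)R\bigr)$ for $\lambda\neq\mu$, we have $\lambda\in\sigma(D)$ if and only if $\nu := (\lambda-\mu)^{-1}\in\sigma(R)\setminus\{0\}$; since $\mu\in\rho(D)$ this accounts for all of $\sigma(D)$, and discreteness of $\sigma(D)$ is immediate. For $\lambda\in\sigma(D)$ I would then set $E_\lambda := \ker(R-\nu)^N$ and $E'_\lambda := \operatorname{im}(R-\nu)^N$.

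Finally one must translate the statements about $R$ into statements about $D$. On the finite-dimensional space $E_\lambda$, which consists of smooth sections by elliptic regularity and on which $R$ acts invertibly (its only eigenvalue being $\nu\neq 0$), the identity $R-\nu = -\nu\,R\,(D-\lambda)$ gives $(D-\lambda)|_{E_\lambda} = -\tfrac{1}{\nu}(R|_{E_\lambda})^{-1}(R-\nu)|_{E_\lambda}$; since $R$ and $R-\nu$ commute it follows that $D$ preserves $E_\lambda$ and that $(D-\lambda)^N E_\lambda = 0$, while conversely any generalized $\lambda$-eigenvector of $D$ is smooth and lies in $\ker(R-\nu)^k\subset E_\lambda$ for some $k$, so $E_\lambda$ is exactly the generalized $\lambda$-eigenspace. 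For the complement, $E'_\lambda$ is closed and $R$-invariant; taking $D_\lambda$ to be $D$ with domain $R(E'_\lambda)\subset H^m\cap E'_\lambda$ makes $E'_\lambda$ invariant under $D_\lambda$, exhibits $R|_{E'_\lambda}$ as the resolvent of $D_\lambda$ at $\mu$, and, since $\nu\notin\sigma\bigl(R|_{E'_\lambda}\bigr)$, shows $\lambda\notin\sigma(D_\lambda)$. The step I expect to need the most care is precisely this bookkeeping between the bounded operator $R$ and the unbounded operator $D$: keeping track of which algebraic identities hold on which domains, matching the algebraic multiplicity of $\nu$ for $R$ with the length of the stabilized generalized $\lambda$-eigenspace of $D$, and confirming that passing to the restriction $D_\lambda$ neither shrinks domains unexpectedly nor enlarges the spectrum. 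Everything else is a routine appeal to Rellich compactness, elliptic regularity, and classical Riesz--Schauder theory.
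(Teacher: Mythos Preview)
The paper does not actually prove this theorem: it is stated with a citation to Shubin's monograph (\emph{Theorem 8.4}) and used as a black box. So there is no ``paper's own proof'' to compare against; your proposal is offering a proof where the authors simply quote a reference.

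That said, your argument is the standard one and is essentially how the result is obtained in Shubin. You correctly reduce to the compact resolvent $R=(D-\mu)^{-1}$ via Rellich, invoke Riesz--Schauder to get the finite-dimensional generalized eigenspaces and the invariant complement, and then transport the spectral decomposition back to $D$ through the map $\lambda\mapsto(\lambda-\mu)^{-1}$. The identity $R-\nu=-\nu R(D-\lambda)$ is the right bridge, and your identification of the delicate point---domain bookkeeping when passing between the bounded $R$ and the unbounded $D$, and checking that restriction to $E'_\lambda$ does not alter the spectrum---is accurate. Nothing is missing; this is a complete sketch of the classical proof.
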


\begin{satz}\label{shubin2}\emph{\cite[Theorem 8.4.]{shubin}}
  Let $M$ be a closed manifold and $D$  an elliptic differential operator on a metric bundle $\mathcal{E}$. For an interval $I$ we define the cone
  \begin{equation*}
    \Lambda_I = \{re^{i\theta} : 0 \leq r < \infty, \theta\in I\}.  
  \end{equation*}
  For $\varepsilon > 0$ there exists an $R>0$ such that the spectrum $\sigma(D)$ is contained in the set $B_R(0)\cup \Lambda_{[-\varepsilon,\varepsilon]}$. 
\end{satz}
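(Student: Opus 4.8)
The plan is to deduce this from ellipticity with a parameter. I will describe the argument in the case that concerns us here, namely when the principal symbol of $D$ is a positive scalar multiple of the identity, $\sigma_{\mathrm{pr}}(D)(x,\xi)=q(x,\xi)\,\Id$ with $q(x,\xi)>0$ for $\xi\neq 0$; the general sectorial version is the content of \cite{shubin}. It is enough to show that, given $\varepsilon>0$, there is an $R>0$ such that every $\lambda$ with $|\lambda|\geq R$ and $\arg\lambda\in[\varepsilon,2\pi-\varepsilon]$ lies in the resolvent set $\rho(D)$; the asserted inclusion $\sigma(D)\subset B_R(0)\cup\Lambda_{[-\varepsilon,\varepsilon]}$ then follows immediately.

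First I would record the basic symbol estimate: setting $\Sigma_\varepsilon=\Lambda_{[\varepsilon,2\pi-\varepsilon]}$, compactness of $M$ together with the homogeneity of $q$ yields a constant $c>0$ with $|q(x,\xi)-\lambda|\geq c\,(|\xi|^{2}+|\lambda|)$ for all $(x,\xi)$ and all $\lambda\in\Sigma_\varepsilon$. Next I would build a parameter-dependent parametrix: in finitely many coordinate charts with local trivializations of $\mathcal{E}$ I would invert the full symbol $d(x,\xi)-\lambda$ of $D-\lambda$ term by term, treating $(|\xi|^{2}+|\lambda|)^{1/2}$ as a quantity of order one and using the estimate above to control each term of the resulting expansion, and then glue the local inverses by a subordinate partition of unity. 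This produces a family of bounded operators $Q_\lambda$ on $L^{2}(M,\mathcal{E})$ with $(D-\lambda)Q_\lambda=I-S_\lambda$ and $Q_\lambda(D-\lambda)=I-S'_\lambda$, where $\|S_\lambda\|$ and $\|S'_\lambda\|$ tend to $0$ as $|\lambda|\to\infty$ within $\Sigma_\varepsilon$. Finally I would pick $R$ so large that both operator norms are $<1$ whenever $|\lambda|\geq R$ and $\lambda\in\Sigma_\varepsilon$, invert $I-S_\lambda$ and $I-S'_\lambda$ by Neumann series, and conclude that $D-\lambda$ has a bounded two-sided inverse, i.e. $\lambda\in\rho(D)$.

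I expect the main obstacle to be the parametrix construction, and more precisely the uniform bound $\|S_\lambda\|\to 0$. This rests on the calculus of $\lambda$-dependent pseudodifferential operators (Agmon, Seeley; see \cite[Ch.~II]{shubin}), where each successive symbol correction genuinely gains a factor $(|\xi|^{2}+|\lambda|)^{-1/2}$, so that the remainder is of negative order uniformly in the parameter. Once that calculus is in hand, the symbol estimate, the gluing, and the Neumann-series step are routine; one also obtains as a byproduct that $\rho(D)\neq\emptyset$, which is exactly the hypothesis needed to apply Theorem~\ref{shubin}.
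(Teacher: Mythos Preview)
The paper does not supply a proof of this theorem at all; it is quoted verbatim from \cite[Theorem~8.4]{shubin} and used as a black box, so there is nothing in the paper to compare your argument against. That said, your sketch is the standard Agmon--Seeley parametrix-with-parameter argument and is essentially how Shubin proves the result, so it is correct; the only caveat is that you have treated the special case of scalar positive principal symbol (which is all the paper actually needs, since $\Omega_\tau$ is of Laplace type), whereas the general statement requires the sectorial hypothesis on $\sigma_{\mathrm{pr}}(D)$ that you defer to \cite{shubin}.
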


\begin{defn}
  Let $\mathcal{E}$ be a vector bundle over a Riemannian manifold $M$. A second order differential operator $D$ is a \emph{Laplace type operator} if for the principal symbol 
  \begin{equation*}
    \sigma_2(D)(x,\xi)=\|\xi\|^2,
  \end{equation*}
  for arbitrary $x\in M$ and $\xi \in T^*M$. In particular each Laplace type operator is elliptic. 
\end{defn}

The reason why we introduce the notion of Laplace type operators, is that a reasonable spectral theory can be developed for those. 

\begin{propo}\label{casimir}
  The Casimir element $\Omega$ induces on each $K$-isotype $L^2(X,E)(\tau)$ a Laplace type operator, having discrete spectrum. We will denote the induced operator by $\Omega_\tau$. Let
  \begin{equation*}
    V_{\tau,\lambda} := \{f \in L^2(X,E)(\tau) : (\Omega_\tau - \lambda)^n f =0 \text{ for some } n\in\mathbb{N}\},
  \end{equation*}
  the generalized eigenspace belonging to $\lambda\in\spec(\Omega_\tau)$. Then $V_{\tau,\lambda}\subset \Gamma^\infty(X,E)(\tau)$, $\dim V_{\tau,\lambda} < \infty$ and $V_{\tau,\infty}$ is stable under $K$ as well as $Z(\mathfrak{g}_\mathbb{C})$. 
\end{propo}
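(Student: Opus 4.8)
The plan is to reduce the whole statement to Shubin's elliptic theory (Theorems~\ref{shubin} and \ref{shubin2}) on a \emph{closed} manifold. First I would pass from $X=\Gamma\backslash G$ to the orbit space $Y:=\Gamma\backslash G/K$. Since $\Gamma$ is torsion free, every subgroup $\Gamma\cap gKg^{-1}$ is a torsion-free finite group, hence trivial, so $\Gamma$ acts freely and properly on $G/K$; therefore $Y$ is a closed smooth manifold and $X\to Y$ is a principal $K$-bundle. Using the smooth $K$-equivariant fibre metric on $E$ fixed in Definition~\ref{hilbertspace} one identifies, as Hilbert spaces and compatibly with smooth sections, the $\tau$-isotype $L^2(X,E)(\tau)$ with the $L^2$-sections $L^2(Y,\mathcal E_\tau)$ of a smooth finite-rank hermitian bundle $\mathcal E_\tau$ over $Y$ (the bundle attached to $\chi$ on the $\Gamma$-side and to $\tau$ on the $K$-side). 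Because $\Omega\in Z(\mathfrak g_\mathbb C)$, the operator $R(\Omega)$ on $\Gamma^\infty(X,E)$ commutes with $R(K)$, hence preserves $\Gamma^\infty(X,E)(\tau)$; under the identification above it becomes a second-order differential operator $\Omega_\tau$ with smooth coefficients on $\mathcal E_\tau$.

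Next I would compute the principal symbol of $\Omega_\tau$ from the decomposition $\Omega=\Omega_K+Y_1^2+\dots+Y_k^2$: on the $\tau$-isotype $\Omega_K$ acts by the scalar $\tau(\Omega_K)$ and so is a zeroth-order term, while the operators $dR(Y_j)$ descend to vector fields whose values span exactly the tangent directions of $Y$, and as $Y_1,\dots,Y_k$ is an orthonormal basis of $\mathfrak p$ for the Killing form one gets $\sigma_2(\Omega_\tau)(x,\xi)=\|\xi\|^2$. Thus $\Omega_\tau$ is a Laplace type operator --- in particular elliptic --- on a metric bundle over the closed manifold $Y$; its principal symbol being a positive scalar, it has a ray of minimal growth, so $\rho(\Omega_\tau)\neq\emptyset$ (equivalently this is immediate from Theorem~\ref{shubin2}, which confines $\sigma(\Omega_\tau)$ to a proper subset of $\mathbb C$). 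Theorem~\ref{shubin} now applies and yields that $\spec(\Omega_\tau)$ is discrete and that for each $\lambda\in\spec(\Omega_\tau)$ there is an $\Omega_\tau$-invariant splitting $L^2(X,E)(\tau)=E_\lambda\oplus E'_\lambda$ with $E_\lambda\subset\Gamma^\infty$, $\dim E_\lambda<\infty$, $(\Omega_\tau-\lambda)^nE_\lambda=0$ for some $n$, and $(\Omega_\tau-\lambda)|_{E'_\lambda}$ invertible.

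It then remains to match this with the generalized eigenspaces. The inclusion $E_\lambda\subseteq V_{\tau,\lambda}$ is clear. Conversely, for $f\in V_{\tau,\lambda}$ write $f=f_0+f_1$ with $f_0\in E_\lambda$, $f_1\in E'_\lambda$; taking $N$ larger than the exponents annihilating $f$ and $E_\lambda$ one finds $(\Omega_\tau-\lambda)^Nf_1=0$, and since $(\Omega_\tau-\lambda)|_{E'_\lambda}$ is invertible, hence injective, so is its $N$-th power, forcing $f_1=0$. Thus $V_{\tau,\lambda}=E_\lambda$, which gives $V_{\tau,\lambda}\subset\Gamma^\infty(X,E)(\tau)$ and $\dim V_{\tau,\lambda}<\infty$ (smoothness is anyway immediate from elliptic regularity for $\Omega_\tau$). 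Finally, writing $V_{\tau,\infty}:=\bigoplus_{\lambda\in\spec(\Omega_\tau)}V_{\tau,\lambda}$, it suffices to show each $V_{\tau,\lambda}$ is stable under $K$ and under $Z(\mathfrak g_\mathbb C)$. For $k\in K$ one has $R(k)R(\Omega)R(k)^{-1}=R(\Ad(k)\Omega)=R(\Omega)$, so $R(k)$ commutes with $\Omega_\tau$ and preserves the $\tau$-isotype, hence preserves $V_{\tau,\lambda}$. For $z\in Z(\mathfrak g_\mathbb C)$, $R(z)$ commutes with $R(\Omega)$ (as $z\Omega=\Omega z$ in $U(\mathfrak g_\mathbb C)$) and with $R(K)$ (as $\Ad(k)z=z$), so it maps the finite-dimensional space $V_{\tau,\lambda}\subset\Gamma^\infty(X,E)(\tau)$ into the smooth $\tau$-isotypic generalized $\lambda$-eigenvectors of $\Omega_\tau$, i.e.\ back into $V_{\tau,\lambda}$.

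The hard part is the first step: correctly constructing the bundle $\mathcal E_\tau$ over $Y=\Gamma\backslash G/K$ and verifying via the symbol computation that $\Omega$ descends there to an elliptic (Laplace type) operator --- this is exactly what unlocks Shubin's theorems, and everything after it is formal manipulation with generalized eigenspaces and with the centrality of $\Omega$ and of $Z(\mathfrak g_\mathbb C)$.
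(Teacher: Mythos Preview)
Your proposal is correct and follows essentially the same route as the paper: pass to the closed manifold $Y=\Gamma\backslash G/K$, identify the $\tau$-isotype with sections of a finite-rank bundle there (the paper writes this as $V_\tau\otimes\Gamma^\infty(Y,E_{\chi,\tau})$ with $E_{\chi,\tau}=G\times_{\Gamma\times K}(V_\chi\otimes V_\tau)$), compute the principal symbol of $\Omega_\tau$ from $\Omega=\Omega_K+\sum Y_j^2$ using that $\Omega_K$ acts as a scalar on $\tau$, and then invoke Theorems~\ref{shubin} and~\ref{shubin2}. Your treatment is in fact slightly more explicit than the paper's in two places --- you verify $\rho(\Omega_\tau)\neq\emptyset$ before applying Theorem~\ref{shubin}, and you spell out why $V_{\tau,\lambda}$ coincides with Shubin's root subspace $E_\lambda$ --- but the underlying argument is the same.
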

\begin{proof}
  We let $E_{\chi,\tau}$ be the vector bundle $G \times_{\Gamma\times K} V_\chi\otimes V_\tau$ with base space $\Gamma\backslash G/K$ where $\Gamma\times K$ acts on $G\times V_\chi\otimes V_\tau$ via
  \begin{equation*}
    (\gamma,k)\cdot(g,v\times w) = (\gamma g k^{-1}, \chi(\gamma)v\times \tau(k)w).
  \end{equation*}
  Let $X\in\mathfrak{g}$ and $f\in C^\infty(G,V_\chi\times V_\tau)$. $X$ induces a differential operator via 
  \begin{equation*}
    Xf(g) = \left.\frac{d}{dt}f(g\exp(tX))\right|_{t=0},
  \end{equation*}
  and this map from the Lie algebra to the algebra of differential operators extends to the universal envelopping algebra $U(\mathfrak{g}_\mathbb{C})$. From the definition it is easily seen, that the induced differential operator is left invariant, meaning that
  \begin{equation*}
    X(L_h(f))(g) = (Xf)(hg),
  \end{equation*}
  where $L_h$ is the left translation by the element $h$. On the other hand, we find for $X\in \mathfrak{g}$ and the right translation $R_h$ and any element $h\in G$, that
  \begin{equation*}
\begin{split}
  X(R_h(f))(g) &= \left.\frac{d}{dt}f(g\exp(tX)h)\right|_{t=0}\\
                               &= \left.\frac{d}{dt}f(gh\exp(\text{Ad}(h)tX))\right|_{t=0}\\
                               &= (\text{Ad}(h)X)(f)(gh).\\
\end{split}
\end{equation*}
Thus we get $X(R_h(f))(g)=(\text{Ad}(h)X)(f)(gh)$ for all $X\in U(\mathfrak{g}_\mathbb{C})$. Since $\Omega\in Z(\mathfrak{g}_\mathbb{C})$ this yields
\begin{equation*}
  \Omega(R_h(f))(g) = (\Omega f)(gh),
\end{equation*}
  for arbitrary $g,h\in G$ and consequently we have for $f\in C^\infty(G, V_\chi\otimes V_\tau)^{\Gamma\times K}$ that
    \begin{equation*}
  (\Omega f)(\gamma xk^{-1}) = \chi(\gamma)\otimes\tau(k)\Omega f(x)
  \end{equation*}
  and thus $C^\infty(G, V_\chi\otimes V_\tau)^{\Gamma\times K}$ is stable under $\Omega$. 
   
   Furthermore
   \begin{equation*}
     \Gamma^\infty(X,E)(\tau) \cong V_\tau\otimes \Hom_K(V_\tau,\Gamma^\infty(X,E)),
   \end{equation*}
  and
  \begin{align*}
    \Hom_K(V_\tau,\Gamma^\infty(X,E)) &\cong (\Gamma^\infty(X,E)\otimes V_\tau)^K\\
    &\cong (C^\infty(G)\otimes V_\chi\otimes V_\tau)^{\Gamma\times K}\\
    &\cong \Gamma^\infty (\Gamma\backslash G/K,E_{\chi,\tau}).
  \end{align*}
  Hence $\Id\otimes\Omega$ induces an operator $\Omega_\tau$ on $L^2(E)(\tau)$. \\
  
  Next we will show, that it is a Laplace type operator. 
  
  Consider the Cartan decomposition $\mathfrak{g} = \mathfrak{k} \oplus \mathfrak{p}$ and basises $X_1,\dots X_l$ and $Y_1,\dots Y_k$ as before, such that
  \begin{equation*}
    \Omega = -X^2_1 -\dots  -X^2_l + Y_1^2 + \dots + Y_k^2 = \Omega_K + Y_1^2 + \dots + Y_k^2. 
  \end{equation*}
  For $\Gamma gK \in \Gamma\backslash G/K$ there exists a neighbourhood $U$, such that we can choose the map
  \begin{equation*}
    g\exp(y_1Y_1+\dots+ y_kY_n) \mapsto (y_1,\dots,y_k)
  \end{equation*}
  as a local coordinate map on $U$. Inside $U$ and with these coordinates we find
  \begin{equation*}
    (\Omega f)(g) = \sum\frac{\partial^2}{\partial y_i^2}f(g)+\Id\otimes\sum(d\tau)(\Omega_K) f(g).
  \end{equation*}
  But since the representation $V_\tau$ is irreducible  and $\Omega_K\in Z(\mathfrak{k}_\mathbb{C})$ the operator $(d\tau)(\Omega_K)$ acts as a scalar, according to the Lemma of Schur. Consequently $\Omega$ induces a second order differential operator with principal symbol $(\Omega_\tau)(x,\xi) = \|\xi\|^2$.  
  
  From Theorem \ref{shubin2} and \ref{shubin} it is now clear, that the spectrum is discrete. By Theorem \ref{shubin} it follows also, that the generalized eigenspace $V_{\tau,\lambda}$ is finite dimensional and $V_{\tau,\lambda} \subset\Gamma^\infty(X,E)(\tau)$.

 $V_{\tau,\lambda}$ is stable under $K$, since $K\cdot L^2(X,E)(\tau) \subset L^2(X,E)(\tau)$ and $\text{Ad}(k)\Omega = \Omega$. Since $Z(\mathfrak{g}_\mathbb{C})\cdot \Gamma^\infty(X,E)(\tau) \subset \Gamma^\infty(X,E)(\tau)$ it is also clear, that $Z(\mathfrak{g}_\mathbb{C})\cdot V_{\tau,\lambda}\subset V_{\tau,\lambda}$. 
\end{proof}

\begin{propo}\label{unprooven}
  The space $L^2(X,E)(\tau)$ is the closure of the algebraic direct sum of all generalized eigenspaces:
  \begin{equation*}
    L^2(X,E)(\tau) = \overline{\bigoplus_{\lambda\in\sigma(\Omega_\tau)}V_{\tau,\lambda}}.
  \end{equation*}
\end{propo}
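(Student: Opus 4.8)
The plan is to deduce the statement from the classical completeness theorem for the root vectors of a non-selfadjoint closed operator with compact resolvent, since the lack of unitarity of $\chi$ means that $\Omega_\tau$ need not be selfadjoint and the ordinary spectral theorem is unavailable. In the form we would use it (Keldysh; see also the accounts of Agmon, of Gohberg--Krein and of Dunford--Schwartz), the theorem states: if $A$ is a closed operator on a Hilbert space $\mathcal H$ with nonempty resolvent set, if $(A-\lambda_0)^{-1}\in\mathcal S_p$ for some $p\in[1,\infty)$ and some $\lambda_0\in\rho(A)$, where $\mathcal S_p$ denotes the Schatten--von Neumann class, and if there are finitely many rays emanating from the origin, each a \emph{ray of minimal growth} for $A$ (so that along it $(A-\lambda)^{-1}$ exists for $|\lambda|$ large and $\|(A-\lambda)^{-1}\|=O(|\lambda|^{-1})$) such that the angle between any two adjacent rays is strictly less than $\pi/p$, then the closed linear span of all root vectors of $A$ equals $\mathcal H$. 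It therefore suffices to verify these two hypotheses for $A=\Omega_\tau$ on $\mathcal H=L^2(X,E)(\tau)$.

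We would first settle the Schatten condition. By Proposition~\ref{casimir}, under the identification of $L^2(X,E)(\tau)$ with $V_\tau\otimes L^2(\Gamma\backslash G/K,E_{\chi,\tau})$ and of $\Omega_\tau$ with $\Id\otimes D$ — the finite dimensional tensor factor $V_\tau$ being harmless for completeness — the operator $D$ is an elliptic differential operator of order $2$, of Laplace type, on the closed Riemannian manifold $\Gamma\backslash G/K$ of dimension $n:=\dim(\Gamma\backslash G/K)=\dim\mathfrak p$, and its resolvent set is nonempty (this was already used in Proposition~\ref{casimir}). For $\lambda_0\in\rho(D)$ the operator $(D-\lambda_0)^{-1}$ is a pseudodifferential operator of order $-2$ on a compact $n$-manifold, so its singular values decay like $j^{-2/n}$ by the Weyl asymptotics of elliptic operators \cite{shubin}; hence $(D-\lambda_0)^{-1}\in\mathcal S_p$ for every real $p>n/2$, and we fix one such $p$.

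Next we produce the rays of minimal growth. The Laplace-type condition $\sigma_2(\Omega_\tau)(x,\xi)=\|\xi\|^2$ makes $\Omega_\tau$ parameter-elliptic, in Agmon's sense, in every closed subsector of $\mathbb C\setminus[0,\infty)$, and the attendant resolvent estimate (Agmon--Nirenberg, Seeley) shows that every ray $\arg\lambda=\theta$ with $\theta\neq0$ is a ray of minimal growth for $\Omega_\tau$. We then choose $\varepsilon>0$ with $\varepsilon<\pi/(2p)$ and take $R$ as in Theorem~\ref{shubin2}, so that $\sigma(\Omega_\tau)\subset B_R(0)\cup\Lambda_{[-\varepsilon,\varepsilon]}$; fixing $\varepsilon'$ with $\varepsilon<\varepsilon'<\pi/(2p)$, we place rays at the angles $\pm\varepsilon'$ together with finitely many further rays subdividing the remaining arc into pieces of angular width $<\pi/p$. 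None of these rays meets $[0,\infty)$, so each is a ray of minimal growth; the angular gap straddling the positive real axis has width $2\varepsilon'<\pi/p$ and all other gaps are $<\pi/p$ by construction; and because $\varepsilon'>\varepsilon$ each such ray eventually leaves $B_R(0)\cup\Lambda_{[-\varepsilon,\varepsilon]}$, consistently with the resolvent existing and decaying along it. Both hypotheses of the completeness theorem are thus in place.

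Applying the theorem, the root vectors of $\Omega_\tau$ span a dense subspace of $L^2(X,E)(\tau)$. By Proposition~\ref{casimir} together with Theorem~\ref{shubin}, the root vectors belonging to an eigenvalue $\lambda\in\sigma(\Omega_\tau)$ span precisely the finite dimensional generalized eigenspace $V_{\tau,\lambda}\subset\Gamma^\infty(X,E)(\tau)$, and root vectors for different eigenvalues are linearly independent; consequently the algebraic span of all root vectors equals $\bigoplus_{\lambda\in\sigma(\Omega_\tau)}V_{\tau,\lambda}$, and passing to the closure gives the asserted identity. The main obstacle we anticipate is not conceptual but a matter of reconciling the Schatten exponent $p$ with the admissible angular spacing $\pi/p$ of the rays of minimal growth; this is precisely where the geometric input of Theorem~\ref{shubin2} enters, and since $\varepsilon$ there may be taken arbitrarily small while the circle minus a neighbourhood of $[0,\infty)$ can be cut into arbitrarily many arcs, no genuine obstruction remains — the substance of the argument is the invocation of the completeness theorem for non-selfadjoint elliptic operators.
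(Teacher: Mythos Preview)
Your argument is correct, and the overall strategy is the same as the paper's in spirit --- invoke a Keldysh-type completeness theorem for a non-selfadjoint operator with Schatten-class resolvent --- but the implementation differs. The paper does not verify rays of minimal growth for $\Omega_\tau$ directly. Instead it observes that, since $\sigma_2(\Omega_\tau)(x,\xi)=\|\xi\|^2$, one can write $\Omega_\tau=\Delta+B$ with $\Delta$ the (selfadjoint) Bochner--Laplace operator on the bundle and $B$ a first-order differential operator; then $R_\lambda(\Delta)$ is a pseudodifferential operator of order $-2$, hence of Schatten class, and $BR_\lambda(\Delta)$ is of order $-1$, hence compact, so $B$ is relatively compact with respect to $\Delta$. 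A single black-box theorem from Markus (\emph{Introduction to the Spectral Theory of Polynomial Operator Pencils}, Theorem~4.3) then gives completeness of the root system of $\Delta+B$ directly. Your route is more self-contained and shows exactly which analytic ingredients are doing the work (parameter-ellipticity, Agmon's ray estimates, the angular version of Keldysh), and would generalize to operators whose principal part is not literally that of a selfadjoint Laplacian; the paper's route is shorter because the ``selfadjoint plus relatively compact'' structure lets it cite a prepackaged perturbation result and avoid the bookkeeping with sectors and Schatten exponents altogether.
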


To prepare the proof of  Proposition \ref{unprooven} we need:

\begin{defn}
  Let $H$ be a Hilbert space and $G$ a linear operator with non-empty resolvent set $\rho(G) \neq \emptyset$. An operator $B$ is said to be compact relative to $G$ if $D(G)\subset D(B)$ and the operator $BR_\lambda(G)$ is compact, where $R_\lambda(G)=(G-\lambda)^{-1}$ is the resolvent of $G$. 
\end{defn}

\begin{satz}\label{relkomp}\emph{\cite[Theorem 4.3.]{markus}}
 Let $H$ be a Hilbert space and $G$ a self-adjoint operator. The resolvent $R_\lambda(G)$ is assumed to be a Schatten class operator and $B$ an operator relatively compact to $G$. Then the operator $C = G + B$  has a compact resolvent and $H$ is the closure of the generalized eigenspaces of $C$:
 \begin{equation*}
  H = \overline{\bigoplus_{\lambda\in\sigma(C)}V_\lambda}
 \end{equation*}
\end{satz}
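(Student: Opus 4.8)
The plan is to split the assertion into its two parts — the compactness of the resolvent of $C=G+B$ and the completeness of the root subspaces — and to treat them by the classical perturbation method of Keldysh (cf.\ Gohberg--Krein). Throughout I write $R_\lambda(G)=(G-\lambda)^{-1}$ for $\lambda\in\rho(G)$; since $G$ is self-adjoint, $\rho(G)\supset\mathbb{C}\setminus\mathbb{R}$ and $\|R_\lambda(G)\|=\mathrm{dist}(\lambda,\sigma(G))^{-1}$. The first step is the compact resolvent. The key preliminary fact is that for self-adjoint $G$ the relative compactness of $B$ forces the relative $G$-bound to be zero: since $BR_i(G)$ is compact and $(G-i)R_{is}(G)\to 0$ in the strong operator topology as $|s|\to\infty$, passing to adjoints and using that right multiplication by a compact operator turns strong convergence into norm convergence yields $\|BR_{is}(G)\|\to 0$. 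Consequently $I+BR_{is}(G)$ is invertible by a Neumann series for large $|s|$, so $\rho(C)\neq\emptyset$, and from the factorisation on $D(G)=D(C)$
\begin{equation*}
  C-\lambda=\bigl(I+BR_\lambda(G)\bigr)(G-\lambda),\qquad R_\lambda(C)=R_\lambda(G)\bigl(I+BR_\lambda(G)\bigr)^{-1},
\end{equation*}
together with the analytic Fredholm theorem applied to the analytic family of compact operators $\lambda\mapsto BR_\lambda(G)$, one sees that $\bigl(I+BR_\lambda(G)\bigr)^{-1}$ is meromorphic with discrete pole set and that $R_\lambda(C)$ is, wherever it exists, the product of the Schatten-class operator $R_\lambda(G)$ with a bounded operator, hence compact. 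Thus $C$ has compact resolvent, its spectrum is a discrete set of eigenvalues of finite algebraic multiplicity, and the root subspaces $V_\lambda$ are the finite-dimensional images of the Riesz spectral projections, i.e.\ the residues of $R_\lambda(C)$.

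The substantial part is the completeness, which I would prove by contradiction in the Keldysh manner. Suppose $f\in H$ is orthogonal to every $V_\lambda$, and for fixed $x\in H$ set $F(\lambda)=\langle R_\lambda(C)x,f\rangle$. Near an eigenvalue $\mu$ the principal part of $R_\lambda(C)$ has range inside $V_\mu$, so $f\perp V_\mu$ makes the singularity removable; hence $F$ extends to an entire function. If I can show $F\equiv 0$ for every $x$, then $R_\lambda(C)^*f=0$ and injectivity of the resolvent forces $f=0$, contradicting $f\neq 0$ and proving completeness. To obtain $F\equiv 0$ I would run a Phragm\'en--Lindel\"of argument driven by the Schatten-class hypothesis. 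Along any ray $\lambda=re^{i\theta}$ with $\theta$ bounded away from $0$ and $\pi$ one has $\|R_\lambda(G)\|\le 1/(r|\sin\theta|)$ and, for large $r$, $\|(I+BR_\lambda(G))^{-1}\|\le 2$ by the vanishing $G$-bound, so $\|R_\lambda(C)\|\le 2/(r|\sin\theta|)\to 0$; thus $F$ decays along every such ray. Moreover the membership $R_\lambda(G)\in\mathcal{S}_p$ controls the eigenvalue counting function of $C$ and bounds the order of the entire function $F$ by a constant depending only on $p$. Covering $\mathbb{C}$ by finitely many sectors, each of opening less than $\pi/\mathrm{order}$ and with boundary rays chosen off the real axis, Phragm\'en--Lindel\"of shows $F$ is bounded on each sector by its decaying boundary values; hence $F$ is a bounded entire function tending to $0$, so $F\equiv 0$ by Liouville.

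The main obstacle is exactly this last step: matching the angular openings of the sectors to the finite order of the resolvent supplied by the Schatten-class condition, while the eigenvalues of the non-self-adjoint operator $C$ may accumulate along the real axis, precisely where the resolvent bound degenerates. Keeping the sector boundaries off the real axis so that the uniform decay estimate is available there, and verifying that the order is small enough for sectors of opening below $\pi/\mathrm{order}$ to tile the plane, is the analytic heart of the theorem and the point where the quantitative strength of the Schatten-class assumption is indispensable.
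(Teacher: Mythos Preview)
The paper does not give its own proof of this theorem at all: it is stated with the citation \emph{[Markus, Theorem~4.3]} and used as a black box in the proof of Proposition~3.5. So there is nothing in the paper to compare your argument against line by line.

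That said, your sketch is exactly the Keldysh--Gohberg--Krein method that underlies the cited result in Markus, and the outline is sound. The reduction to an entire function $F(\lambda)=\langle R_\lambda(C)x,f\rangle$ via orthogonality to the root subspaces, the decay of $\|R_\lambda(C)\|$ along non-real rays obtained from the vanishing relative $G$-bound, and the Phragm\'en--Lindel\"of step driven by the Schatten-class hypothesis are precisely the ingredients of the standard proof. You have also correctly isolated the genuine analytic difficulty: controlling the order of $F$ in terms of the Schatten exponent $p$ so that sectors of opening $<\pi/p$ (with boundaries kept off the real axis) tile the plane. In the literature this is handled by an estimate on the regularised Fredholm determinant $\det_{\lceil p\rceil}(I+BR_\lambda(G))$, which furnishes the required growth bound; your sketch gestures at this but does not supply it. If you want the argument to stand on its own rather than point to Markus, that determinant estimate is the one missing technical lemma.
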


\begin{proof}[Proof of Proposition \ref{unprooven}]
According to Proposition \ref{casimir} we have $\sigma(\Omega_\tau)(x,\xi)=\|\xi\|^2$. Hence we get $\Omega_\tau = \Delta + B$, where $\Delta$ is the Bochner-Laplace operator, which is self-adjoint, and $B$ is a first order differential operator. The resolvent $R_\lambda(\Delta)$ is of order $-2$, hence compact and a Schatten class operator. Similarly $BR_\lambda(\Delta)$ is of order $-1$ and also compact. The statement now follows by applying Theorem \ref{relkomp}. 
\end{proof}

\begin{defn}
  Let $V_{\text{fin}} \subset \Gamma^\infty(X,E)$ be the set of all smooth sections which are $K$- as well as $Z(\mathfrak{g}_\mathbb{C})$-finite. 
\end{defn}
\begin{propo}
  $V_{\text{fin}}$ is the algebraic direct sum of all generalized eigenspaces for the operators $\Omega_\tau$:
  \begin{equation}\label{Vfindirect}
    V_{\text{fin}} = \bigoplus_{\tau\in\widehat{K}}\bigoplus_{\lambda\in\sigma(\Omega_\tau)}V_{\tau,\lambda}. 
  \end{equation}
  In particular, $V_{\text{fin}}$ is dense in $L^2(X,E)$ and consequently, because $V_{\text{fin}}\subset\Gamma^\infty(X,E)$, it is dense in $\Gamma^\infty(X,E)$. 
\end{propo}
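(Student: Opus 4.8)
The plan is to prove the identity \eqref{Vfindirect} by a double inclusion, and then to deduce density of $V_{\text{fin}}$ from Proposition~\ref{unprooven}. For the inclusion $\supseteq$, note that each $V_{\tau,\lambda}$ is by definition contained in $\Gamma^\infty(X,E)(\tau)\subset \Gamma^\infty(X,E)$ (using the finite-dimensionality statement from Proposition~\ref{casimir}), it is $K$-finite because it lies in a single $K$-isotype (which is finite-dimensional as a consequence of admissibility of $R|_K$, since $K$ is compact and $\tau$ occurs with finite multiplicity), and it is $Z(\mathfrak{g}_\mathbb{C})$-finite because by Proposition~\ref{casimir} the space $V_{\tau,\lambda}$ is stable under $Z(\mathfrak{g}_\mathbb{C})$ and is finite-dimensional, so every element generates a finite-dimensional $Z(\mathfrak{g}_\mathbb{C})$-module. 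Hence each summand, and therefore the algebraic sum, lies in $V_{\text{fin}}$; moreover the sum is direct because the $V_{\tau,\lambda}$ for distinct $\tau$ lie in distinct $K$-isotypes, and for fixed $\tau$ the generalized eigenspaces for distinct eigenvalues of $\Omega_\tau$ are linearly independent.

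For the reverse inclusion $\subseteq$, take $f\in V_{\text{fin}}$. Since $f$ is $K$-finite and $R|_K$ is unitary with the isotypical decomposition $L^2(X,E)=\widehat{\bigoplus}_\tau L^2(X,E)(\tau)$, the vector $f$ lies in a finite algebraic sum of isotypes $\bigoplus_{\tau\in S}\Gamma^\infty(X,E)(\tau)$, so it suffices to treat $f\in \Gamma^\infty(X,E)(\tau)$ that is $Z(\mathfrak{g}_\mathbb{C})$-finite, and in particular $\Omega$-finite, i.e. $\Omega_\tau$-finite. Then $f$ generates a finite-dimensional $\Omega_\tau$-stable subspace $W\subset L^2(X,E)(\tau)$; decomposing $W$ according to the generalized eigenspace (Jordan) decomposition of the operator $\Omega_\tau|_W$ shows $W\subset \bigoplus_{\lambda} V_{\tau,\lambda}$, hence $f\in \bigoplus_\lambda V_{\tau,\lambda}$. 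This proves \eqref{Vfindirect}.

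Finally, density follows immediately: by Proposition~\ref{unprooven} each $L^2(X,E)(\tau)$ is the closure of $\bigoplus_{\lambda\in\sigma(\Omega_\tau)}V_{\tau,\lambda}$, and by the Hilbert space decomposition $L^2(X,E)=\widehat{\bigoplus}_\tau L^2(X,E)(\tau)$ the algebraic sum $\bigoplus_\tau\bigoplus_\lambda V_{\tau,\lambda}=V_{\text{fin}}$ is dense in $L^2(X,E)$. Since $V_{\text{fin}}\subset \Gamma^\infty(X,E)$ and $\Gamma^\infty(X,E)$ is itself dense in $L^2(X,E)$ (by definition of the latter as a completion), $V_{\text{fin}}$ is also dense in $\Gamma^\infty(X,E)$ with respect to the $L^2$-norm.

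The main subtlety I expect is the bookkeeping in the reverse inclusion: one must be careful that $Z(\mathfrak{g}_\mathbb{C})$-finiteness together with $K$-finiteness really does force $f$ into a finite sum of the $V_{\tau,\lambda}$, which requires knowing that $\Omega$ alone (rather than all of $Z(\mathfrak{g}_\mathbb{C})$) already controls the decomposition on each isotype — this is exactly what Proposition~\ref{casimir} provides, since $\Omega_\tau$ has discrete spectrum with finite-dimensional generalized eigenspaces, so any finite-dimensional $\Omega_\tau$-stable subspace splits as a finite sum of generalized eigenspaces. Everything else is routine once the isotypical and Jordan decompositions are invoked.
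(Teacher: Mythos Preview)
Your argument is correct and follows essentially the same route as the paper: the inclusion $\supseteq$ uses that each $V_{\tau,\lambda}$ is finite-dimensional, $K$-stable and $Z(\mathfrak{g}_\mathbb{C})$-stable (Proposition~\ref{casimir}); the inclusion $\subseteq$ reduces via $K$-finiteness to a single isotype and then applies the Jordan decomposition of $\Omega_\tau$ on the finite-dimensional cyclic subspace generated by $f$; density comes from Proposition~\ref{unprooven}. One small correction: your parenthetical justification that $V_{\tau,\lambda}$ is $K$-finite ``because it lies in a single $K$-isotype (which is finite-dimensional as a consequence of admissibility of $R|_K$ \ldots)'' is not right---the isotype $L^2(X,E)(\tau)$ is in general infinite-dimensional and no admissibility of $R$ is available here---but the conclusion is fine for the simpler reason you already have at hand, namely that $V_{\tau,\lambda}$ itself is finite-dimensional and $K$-stable by Proposition~\ref{casimir}.
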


\begin{proof}
  If $f$ is an element in the above direct sum, it is clear that $f\in V_{\text{fin}}$, since each generalized eigenspace $V_{\tau,\lambda}$ is finite dimensional and $K$- and $Z(\mathfrak{g}_\mathbb{C})$-invariant.
  
  If we now take $f\in V_{\text{fin}}$ we obtain 
  \begin{equation*}
    f\in \bigoplus_{\tau\in\widehat{K}} L^2(X,E)(\tau)
  \end{equation*}
  because of the $K$-finiteness of $f$. Hence we can assume, that $f\in L^2(X,E)(\tau)$ for some $\tau\in\widehat{K}$. Let $W\subset L^2(X,E)(\tau)$ be the finite-dimensional $Z(\mathfrak{g}_\mathbb{C})$- and $K$-invariant vectorspace, generated by $f$. $W$ is stable under $\Omega_\tau$, since this operator is induced by $\Omega\in Z(\mathfrak{g}_\mathbb{C})$. Consider the operator $\Omega_\tau|_W$. By the theorem about the Jordan normal form we have a direct sum decomposition
  \begin{equation*}
    W = \bigoplus_{\lambda\in\sigma(\Omega_\tau)} V_{\tau,\lambda}\cap W.
  \end{equation*}
  This proves, that $f$ lies in the above direct sum. 
\end{proof}

We will now cite two propositions, which we will need to infer a filtration of the $(\mathfrak{g},K)$-module $V_{\text{fin}}$. 

\begin{propo}\label{wallachprop}\emph{\cite[Corollary 3.4.7.]{wallach}}
  Let $V$ a finitely generated $(\mathfrak{g},K)$-module, such that $\dim Z(\mathfrak{g_\mathbb{C}})v<\infty$ for all $v\in V$. Then $V$ is admissible. 
\end{propo}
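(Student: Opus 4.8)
The plan is to reduce, by elementary d\'evissage, to the case of a finitely generated $(\mathfrak{g},K)$-module admitting an infinitesimal character, and then to invoke Harish-Chandra's admissibility theorem.

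First I would choose a finite generating set of $V$ and replace it by the finite-dimensional, $K$-stable subspace $V_0$ that it spans (finite dimensional because $(\mathfrak{g},K)$-modules are $K$-finite), so that $V=U(\mathfrak{g}_\mathbb{C})V_0$. By hypothesis $W_0:=Z(\mathfrak{g}_\mathbb{C})V_0$ is finite dimensional, being a finite sum of the finite-dimensional spaces $Z(\mathfrak{g}_\mathbb{C})v$ with $v$ ranging over a basis of $V_0$. Put $J:=\{z\in Z(\mathfrak{g}_\mathbb{C}):zW_0=0\}$. Then $Z(\mathfrak{g}_\mathbb{C})/J$ embeds into $\End(W_0)$ and is hence finite dimensional, so $J$ is a finite-codimension ideal; and since $Z(\mathfrak{g}_\mathbb{C})$ is central in $U(\mathfrak{g}_\mathbb{C})$ and $V_0\subseteq W_0$, one gets $JV=J\,U(\mathfrak{g}_\mathbb{C})V_0=U(\mathfrak{g}_\mathbb{C})\,JV_0=0$. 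So it is enough to establish admissibility for finitely generated $(\mathfrak{g},K)$-modules that are annihilated by some finite-codimension ideal $J\subseteq Z(\mathfrak{g}_\mathbb{C})$.

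Next I would use that $Z(\mathfrak{g}_\mathbb{C})/J$ is a finite-dimensional commutative $\mathbb{C}$-algebra, hence a finite product of local Artinian rings. Their idempotents are central and $K$ acts trivially on $Z(\mathfrak{g}_\mathbb{C})$, so they split $V=\bigoplus_iV_i$ into finitely many finitely generated $(\mathfrak{g},K)$-submodules on each of which $Z(\mathfrak{g}_\mathbb{C})$ acts through a local ring with maximal ideal $\mathfrak{m}_i$ satisfying $\mathfrak{m}_i^{N}=0$ for some $N$. The chain $V_i\supseteq\mathfrak{m}_iV_i\supseteq\mathfrak{m}_i^{2}V_i\supseteq\cdots\supseteq\mathfrak{m}_i^{N}V_i=0$ then consists of $(\mathfrak{g},K)$-submodules, all finitely generated because $U(\mathfrak{g}_\mathbb{C})$ is Noetherian, and on every subquotient $Z(\mathfrak{g}_\mathbb{C})$ acts by a single character; that is, every subquotient is a finitely generated $(\mathfrak{g},K)$-module with an infinitesimal character. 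Since the passage to a $\tau$-isotypic component is exact on $K$-modules, a module possessing a finite filtration with admissible subquotients is itself admissible, and a finite direct sum of admissible modules is admissible. Hence the problem is reduced to the infinitesimal-character case.

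That case is precisely Harish-Chandra's fundamental admissibility theorem (see \cite{wallach}), and establishing it is the single genuinely non-formal step, and hence the main obstacle. One standard route is to show that such a module is of finite length and that every irreducible $(\mathfrak{g},K)$-module is admissible, the latter following, for instance, from Casselman's subrepresentation theorem: the module embeds into a principal series induced from a finite-dimensional representation of a minimal parabolic subgroup, whose $\tau$-isotypes are finite dimensional by Frobenius reciprocity. Granting this, every subquotient arising above is admissible, so each $V_i$ is admissible and therefore so is $V$, as claimed.
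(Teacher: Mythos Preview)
The paper does not supply its own proof of this proposition; it is quoted verbatim as \cite[Corollary 3.4.7.]{wallach} and used as a black box in the proof of Proposition~\ref{filpropo}. So there is nothing in the paper to compare your argument against.

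That said, your sketch is the standard and correct route to this result. The d\'evissage is sound: passing to a finite-dimensional $K$-stable generating space $V_0$, observing that the annihilator $J$ of $Z(\mathfrak{g}_\mathbb{C})V_0$ in $Z(\mathfrak{g}_\mathbb{C})$ has finite codimension and kills all of $V$ (by centrality), and then decomposing via the Artinian structure of $Z(\mathfrak{g}_\mathbb{C})/J$ and filtering by powers of each maximal ideal, are all unproblematic. One minor point worth making explicit is that the idempotents of $Z(\mathfrak{g}_\mathbb{C})/J$ really do give $(\mathfrak{g},K)$-module projections: this uses that $K$ acts trivially on $Z(\mathfrak{g}_\mathbb{C})$, which in the paper's setting (connected $G$, hence connected $K$) follows from $\operatorname{ad}(\mathfrak{k})Z(\mathfrak{g}_\mathbb{C})=0$. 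You correctly identify the only substantive input as Harish-Chandra's admissibility theorem for finitely generated $(\mathfrak{g},K)$-modules with infinitesimal character, and your indicated proof via the subrepresentation theorem is one of the standard ones.
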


\begin{propo}\label{knappprop}\emph{\cite[Corollary 10.42.]{knapp}}
  Each Harish-Chandra-module $V$ (in other words: a finitely generated, admissible $(\mathfrak{g}, K)$-module) has a finite composition series
  \begin{equation*}
   V = W_k\supset W_{k-1}\supset \dots \supset W_0= 0
  \end{equation*}
 with irreducible quotients $W_j/W_{j-1}$. The multiplicities of the irreducible subquotients are independent of the chosen composition series.  
\end{propo}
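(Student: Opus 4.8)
The statement is classical and due in essence to Harish-Chandra; since a fully self-contained argument is long, I would present only the structure of the proof. Two assertions have to be established: the \emph{existence} of a finite composition series, and the \emph{uniqueness} of the multiplicities of the irreducible subquotients. The second follows formally from the first: once $V$ is known to have finite length, the Jordan--H\"older theorem applied to the lattice of $(\mathfrak{g},K)$-submodules of $V$ (a submodule here being a subspace stable both under $\mathfrak{g}$ and under $K$) shows that any two composition series are equivalent, hence have the same multiset of irreducible quotients. So the entire content is the finiteness of length.

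To prove that a finitely generated admissible $(\mathfrak{g},K)$-module $V$ has finite length, I would first reduce to the case of a single generalized infinitesimal character. Since $V$ is finitely generated and each generator spans a finite-dimensional $Z(\mathfrak{g}_{\mathbb C})$-module (part of the Harish-Chandra hypothesis, cf.\ Proposition \ref{wallachprop}), the center $Z(\mathfrak{g}_{\mathbb C})$ acts on $V$ through finitely many generalized characters; splitting off the corresponding generalized eigenspaces lets us assume there is one character $\chi_Z$ with $(\ker\chi_Z)^N V = 0$. Filtering by the powers $(\ker\chi_Z)^j V$ reduces further to the case $\xi v = \chi_Z(\xi)v$ for all $\xi \in Z(\mathfrak{g}_{\mathbb C})$; in particular the Casimir element acts as a scalar on each subquotient.

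On a module with fixed infinitesimal character one invokes the genuine finiteness input. Noetherianity of $V$ costs nothing, since $U(\mathfrak{g}_{\mathbb C})$ is a Noetherian ring and $V$ is finitely generated over it; the real point is to bound the length from above. This is done by combining the fact that, up to equivalence, only finitely many irreducible $(\mathfrak{g},K)$-modules share a given infinitesimal character with a uniform bound on how often each can occur in a finitely generated such module --- the latter obtained either from Harish-Chandra's analysis of the center and of characters, or from the subrepresentation theorem of Harish-Chandra and Casselman, which embeds $V$ into a finite direct sum of (non-unitary) principal series representations, each of these being transparently of finite length. Assembling these facts yields that $V$ has finite length, and hence the asserted composition series; this is precisely the path of \cite[Ch.~VIII and X]{knapp} culminating in Corollary 10.42.

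The step I expect to be the main obstacle is this last bound on the length for fixed infinitesimal character. The naive estimate $\mathrm{length}(V)\le \dim V(\tau)$ for a single $K$-type $\tau$ is false in general, because $\tau$ need not appear in every irreducible subquotient, so one genuinely needs either the structure theory of $Z(\mathfrak{g}_{\mathbb C})$-characters or the embedding into principal series. Either route is substantial machinery, which is why quoting Knapp rather than reproducing the argument is the sensible choice in this paper.
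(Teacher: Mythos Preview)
The paper gives no proof of this proposition at all: it is stated with a bare citation to \cite[Corollary 10.42]{knapp} and nothing more. Your sketch is a reasonable outline of how the argument in Knapp actually runs (Jordan--H\"older for uniqueness, reduction to a single generalized infinitesimal character, and then the substantive finiteness input via the structure of $Z(\mathfrak{g}_{\mathbb C})$ or the Casselman subrepresentation theorem), and you yourself conclude that ``quoting Knapp rather than reproducing the argument is the sensible choice in this paper'' --- which is precisely what the authors do.

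One small inaccuracy in your sketch: you justify $Z(\mathfrak{g}_{\mathbb C})$-finiteness of the generators as ``part of the Harish-Chandra hypothesis, cf.\ Proposition \ref{wallachprop}''. In the paper's definition a Harish-Chandra module is only assumed finitely generated and admissible; $Z(\mathfrak{g}_{\mathbb C})$-finiteness is a consequence, not an assumption, and Proposition \ref{wallachprop} runs in the opposite direction (it deduces admissibility from $Z$-finiteness plus finite generation). This does not affect the overall shape of the argument, but the cross-reference is misleading.
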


\begin{propo}\label{filpropo}
  The exists a seperated, exhaustive and increasing filtration $\text{Fil}_iV$, where $i$ ranges over all nonnegative integers, of $V_{\text{fin}}$ as a $(\mathfrak{g},K)$-module, such that each quotient $\text{Fil}_iV/\text{Fil}_{i-1}V$ is admissible and irreducible. The graduated module
  \begin{equation*}
    \text{Gr}\;V_{\text{fin}} = \bigoplus_{i=0}^\infty \text{Fil}_{i+1}V/\text{Fil}_iV 
  \end{equation*}
  is independent of the chosen filtration. 
  \end{propo}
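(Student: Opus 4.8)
The plan is to construct the filtration by exhausting $V_{\text{fin}}$ by finitely generated submodules, each of which is a Harish-Chandra module in the sense of Proposition~\ref{knappprop}, and then splicing together composition series of these. By \eqref{Vfindirect}, together with the countability of $\widehat{K}$ and of each discrete spectrum $\sigma(\Omega_\tau)$, the space $V_{\text{fin}}$ has a countable vector space basis $v_1,v_2,\dots$. I would let $W_n\subseteq V_{\text{fin}}$ be the $(\mathfrak{g},K)$-submodule generated by $v_1,\dots,v_n$, so that $W_1\subseteq W_2\subseteq\cdots$ is an increasing chain of finitely generated submodules with $\bigcup_n W_n=V_{\text{fin}}$. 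Since $W_n\subseteq V_{\text{fin}}$, every vector of $W_n$ is $Z(\mathfrak{g}_\mathbb{C})$-finite, so Proposition~\ref{wallachprop} applies and $W_n$ is admissible; being also finitely generated, $W_n$ is a Harish-Chandra module.

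Next I would build the filtration recursively. By Proposition~\ref{knappprop}, $W_1$ has a finite composition series with irreducible quotients. Assuming a finite filtration of $W_n$ by submodules with irreducible quotients has been constructed, observe that $W_{n+1}/W_n$ is finitely generated (as a quotient of the finitely generated $W_{n+1}$) and admissible (as a quotient of the admissible $W_{n+1}$), hence again a Harish-Chandra module; pulling a composition series of $W_{n+1}/W_n$ back along $W_{n+1}\to W_{n+1}/W_n$ refines the inclusion $W_n\subseteq W_{n+1}$ by submodules with irreducible quotients. Concatenating these refinements over all $n$, and omitting any repetitions at the junctions, yields an increasing, separated and exhaustive filtration $0=\text{Fil}_0V\subsetneq\text{Fil}_1V\subsetneq\cdots$ of $V_{\text{fin}}$ with $\bigcup_i\text{Fil}_iV=\bigcup_nW_n=V_{\text{fin}}$ and each quotient $\text{Fil}_iV/\text{Fil}_{i-1}V$ irreducible. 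Since $V_{\text{fin}}$ is not admissible — by \eqref{Vfindirect} at least one $K$-isotype $\bigoplus_\lambda V_{\tau,\lambda}$ is infinite-dimensional, the spectrum $\sigma(\Omega_\tau)$ being infinite — no $W_n$ can exhaust $V_{\text{fin}}$, so the filtration is genuinely indexed by all nonnegative integers. Finally, each quotient $\text{Fil}_iV/\text{Fil}_{i-1}V$ is an irreducible subquotient of the Harish-Chandra module $W_n$ containing it, hence admissible; in fact each $\text{Fil}_iV$ has finite length and is therefore itself a Harish-Chandra module.

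For the independence of $\text{Gr}\,V_{\text{fin}}$, I would proceed as follows. For an irreducible $(\mathfrak{g},K)$-module $\pi$ and a Harish-Chandra module $M$, let $[M:\pi]$ be the Jordan--H\"older multiplicity of $\pi$ in $M$, which is well defined by Proposition~\ref{knappprop} and additive along short exact sequences of Harish-Chandra modules. Introduce the intrinsic quantity $[V_{\text{fin}}:\pi]:=\sup_M[M:\pi]\in\mathbb{N}_0\cup\{\infty\}$, the supremum over all finitely generated submodules $M\subseteq V_{\text{fin}}$. Given any filtration $\text{Fil}_\bullet$ as produced above, $0=\text{Fil}_0V\subseteq\cdots\subseteq\text{Fil}_nV$ is a composition series of the Harish-Chandra module $\text{Fil}_nV$, so $\#\{1\le i\le n:\text{Fil}_iV/\text{Fil}_{i-1}V\cong\pi\}=[\text{Fil}_nV:\pi]$, and as $n\to\infty$ this increases to $m_\pi(\text{Fil}_\bullet):=\#\{i\ge 1:\text{Fil}_iV/\text{Fil}_{i-1}V\cong\pi\}$. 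On the one hand each $\text{Fil}_nV$ is a finitely generated submodule of $V_{\text{fin}}$, whence $m_\pi(\text{Fil}_\bullet)\le[V_{\text{fin}}:\pi]$; on the other hand any finitely generated $M\subseteq V_{\text{fin}}$ is contained in some $\text{Fil}_nV$, whence $[M:\pi]\le[\text{Fil}_nV:\pi]\le m_\pi(\text{Fil}_\bullet)$ and therefore $[V_{\text{fin}}:\pi]\le m_\pi(\text{Fil}_\bullet)$. Hence $m_\pi(\text{Fil}_\bullet)=[V_{\text{fin}}:\pi]$ for every $\pi$, independently of the filtration, and since $\text{Gr}\,V_{\text{fin}}\cong\bigoplus_\pi\pi^{\oplus m_\pi(\text{Fil}_\bullet)}$ (the sum over isomorphism classes of irreducible $(\mathfrak{g},K)$-modules) the graded module depends only on $V_{\text{fin}}$. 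The one point that genuinely needs care is precisely this uniqueness argument: $V_{\text{fin}}$ is not itself a Harish-Chandra module, so Jordan--H\"older cannot be invoked for it directly, and one must instead compare every filtration to the filtration-free multiplicities $[V_{\text{fin}}:\pi]$ obtained by exhausting $V_{\text{fin}}$ by its finite-length submodules; the remaining verifications are routine bookkeeping with finitely generated and admissible $(\mathfrak{g},K)$-modules (closure under quotients, and the fact that a finite-length module is finitely generated and admissible).
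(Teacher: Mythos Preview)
Your proof is correct and follows essentially the same strategy as the paper: exhaust $V_{\text{fin}}$ by finitely generated (hence Harish--Chandra) $(\mathfrak{g},K)$-submodules, apply Propositions~\ref{wallachprop} and~\ref{knappprop} to each, and splice the resulting composition series. The paper uses the submodules $U(\mathfrak{g}_\mathbb{C})V_{\tau,\lambda}$ as the building blocks rather than those generated by basis vectors, and is sketchier on the independence of $\text{Gr}\,V_{\text{fin}}$, which you handle more carefully via the intrinsic multiplicities $[V_{\text{fin}}:\pi]$.
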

  
  \begin{proof}
    We choose a generalized eigenspace $V_{\tau,\lambda}$ from the direct sum in (\ref{Vfindirect}). Since it is $K$- and $Z(\mathfrak{g}_\mathbb{C})$-stable, we find $U(\mathfrak{g}_\mathbb{C})V_{\tau,\lambda} \subset V_{\text{fin}}$. According to Proposition \ref{wallachprop} the $(\mathfrak{g},K)$-module $U(\mathfrak{g}_\mathbb{C})V_{\tau,\lambda}$ is admissible and according to Proposition \ref{knappprop} there exists a finite composition series
     \begin{equation*}
     U(\mathfrak{g}_\mathbb{C})V_{\tau,\lambda} = \text{Fil}_kV\supset \text{Fil}_{k-1}V\supset\dots\supset \text{Fil}_0V = 0
   \end{equation*}
   such that $\bigoplus_{i=1}^k \text{Fil}_iV/\text{Fil}_{i-1}V$ is independent of the chosen composition series. Now proceed in the same manner with $V_{\text{fin}}/U(\mathfrak{g}_\mathbb{C})V_{\tau,\lambda}$ to obtain the filtration. 
  \end{proof}
  
\begin{propo}\label{realanalytic}
  Each element $f\in V_{\tau,\lambda}$ is real-analytic. 
\end{propo}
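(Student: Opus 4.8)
The plan is to deduce the statement from the analytic hypoellipticity of elliptic operators with real-analytic coefficients (the Morrey--Nirenberg theorem), after replacing the non-elliptic Casimir operator by a genuinely elliptic operator that has the same effect on the finite-dimensional space $V_{\tau,\lambda}$ where $f$ lives.

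\textbf{Step 1 (lift to $G$).} Assuming $f\neq 0$, I would use the identification $\Gamma^\infty(X,E)\cong C^\infty(G,V_\chi)^\Gamma$ to regard $f$ as a smooth $V_\chi$-valued function $F$ on $G$ with $F(\gamma g)=\chi(\gamma)F(g)$, lying in the finite-dimensional space $V_{\tau,\lambda}$ (now viewed inside $C^\infty(G,V_\chi)^\Gamma$) and satisfying $(\Omega-\lambda)^nF=0$ for some $n$, where $\Omega$ is realized as the left-invariant differential operator attached to the Casimir element, exactly as in the proof of Proposition \ref{casimir}.

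\textbf{Step 2 (replace $\Omega$ by an elliptic operator).} By Proposition \ref{casimir} the space $V_{\tau,\lambda}$ is finite-dimensional, stable under $K$ and under $Z(\mathfrak g_{\mathbb C})$, hence stable under $\Omega$ and --- being $K$-stable and finite-dimensional --- also under the Casimir $\Omega_K\in U(\mathfrak k_{\mathbb C})$; since $V_{\tau,\lambda}\subset L^2(X,E)(\tau)$ and $\tau$ is irreducible, Schur's lemma forces $\Omega_K$ to act on $V_{\tau,\lambda}$ by a single scalar $c_\tau$. Writing $\Omega=\Omega_K+Y_1^2+\dots+Y_k^2=-X_1^2-\dots-X_l^2+Y_1^2+\dots+Y_k^2$ as in the Preliminaries, I would set
\[
  \mathcal L:=\Omega-2\Omega_K=X_1^2+\dots+X_l^2+Y_1^2+\dots+Y_k^2\in U(\mathfrak g_{\mathbb C}).
\]
On the invariant subspace $V_{\tau,\lambda}$ --- and only there, since $\mathcal L$ is not central in $U(\mathfrak g_{\mathbb C})$ --- the operator $\mathcal L$ agrees with $\Omega-2c_\tau$, which has the single generalized eigenvalue $\lambda-2c_\tau$; consequently $(\mathcal L-(\lambda-2c_\tau))^nF=0$.

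\textbf{Step 3 (analytic hypoellipticity).} Realized as a left-invariant differential operator on $G$, $\mathcal L$ has principal symbol $\xi\mapsto-\bigl(\sum_i\xi(X_i)^2+\sum_j\xi(Y_j)^2\bigr)$, which is nonzero for $\xi\neq 0$ because $X_1,\dots,X_l,Y_1,\dots,Y_k$ is a basis of $\mathfrak g$; hence $\mathcal L$, and therefore any polynomial in $\mathcal L$ of positive degree such as $(\mathcal L-(\lambda-2c_\tau))^n$, is an elliptic differential operator, with real-analytic coefficients since the group operations of $G$ are real-analytic. By the Morrey--Nirenberg theorem on the analyticity of solutions of elliptic equations with real-analytic coefficients, applied componentwise to the $V_\chi$-valued function $F$, $F$ is real-analytic on $G$; pushing this down along the real-analytic covering $G\to\Gamma\backslash G$ and the real-analytic structure of the bundle $E$ (here $\Gamma$ is discrete, $K$ compact and $G$ real-analytic) shows that $f$ is real-analytic.

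\textbf{Main obstacle.} The only genuinely delicate point is Step 2: one must observe that although $\Omega$ by itself is not elliptic on $G$ and $\mathcal L$ by itself is not central, the combination works because $\Omega_K$ is scalar precisely on the finite-dimensional invariant subspace $V_{\tau,\lambda}$ in which $F$ sits, so that the elliptic operator $\mathcal L$ and the central operator $\Omega$ differ only by the constant $2c_\tau$ on that subspace. Once this is noticed, the ellipticity of $\mathcal L$, the real-analyticity of its coefficients, and the descent from $G$ to sections of $E$ are all routine.
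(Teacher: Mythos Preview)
Your proof is correct and rests on the same principle as the paper's --- analytic hypoellipticity of elliptic operators with real-analytic coefficients --- but the implementation differs. The paper simply invokes Proposition~\ref{casimir}, where it was already shown that $\Omega_\tau$, viewed as an operator on sections of the bundle $E_{\chi,\tau}$ over the locally symmetric space $\Gamma\backslash G/K$, is of Laplace type and hence elliptic; since $(\Omega_\tau-\lambda)^N f=0$, analytic regularity applies on that base. You instead stay on $G$, where $\Omega$ itself is not elliptic, and fix this by subtracting $2\Omega_K$: because $\Omega_K$ acts by the scalar $c_\tau$ on the $\tau$-isotype, the elliptic sum-of-squares operator $\mathcal L=\Omega-2\Omega_K$ coincides with $\Omega-2c_\tau$ on $V_{\tau,\lambda}$, and $(\mathcal L-(\lambda-2c_\tau))^nF=0$ follows (using that $\Omega$ and $\Omega_K$ commute and that $\Omega-\lambda$ preserves the $\tau$-isotype). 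Your route avoids the detour through the auxiliary bundle $E_{\chi,\tau}$ and delivers analyticity of the lift $F$ on $G$ directly, which is exactly what is used in the proof of Proposition~\ref{l2filtration}; the paper's route is shorter because the ellipticity was already packaged into Proposition~\ref{casimir}. In essence the two arguments are dual: the paper quotients out $K$ so that the problematic $\mathfrak k$-directions disappear, whereas you keep the $\mathfrak k$-directions but flip their sign using that $\Omega_K$ is scalar on a fixed $K$-type.
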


\begin{proof}\label{realanalytic}
  If $f\in V_{\tau,\lambda}$ there exists some $N\in\mathbb{N}$, such that $(\Omega_\tau-\lambda)^Nf$ = 0. Hence $f$ is annihilated by the elliptic differential operator $(\Omega_\tau - \lambda)^N$, whence it is real-analytic. 
\end{proof}

The filtration of $V_{\text{fin}}$ furnishes a filtration of the right regular representation, as we will show now.

\begin{propo}\label{l2filtration}
  The representation $(R,L^2(X,E))$ admits a seperated, exhaustive and increasing filtration of subrepresentations
  \begin{equation*}
    0 = V_0 \subset V_1\subset \dots \subset \bigcup_{i=0}^\infty V_i = L^2(X,E),
  \end{equation*}
   induced by the filtration of the $(\mathfrak{g},K)$-module $V_{\text{fin}}$ from Proposition \ref{filpropo}. 
\end{propo}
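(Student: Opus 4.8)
The plan is to transport the $(\mathfrak{g},K)$-module filtration $0=\text{Fil}_0V\subset\text{Fil}_1V\subset\cdots$ of $V_{\text{fin}}$ from Proposition \ref{filpropo} to $L^2(X,E)$ by taking $L^2$-closures: set $V_i:=\overline{\text{Fil}_iV}$, the closure of $\text{Fil}_iV$ with respect to the $L^2$-norm. First I would dispatch the formal properties. The chain $(V_i)$ is increasing, $V_0=0$, and each $V_i$ is closed, which accounts for ``separated'' and ``increasing''; and $\bigcup_iV_i\supseteq\bigcup_i\text{Fil}_iV=V_{\text{fin}}$ is dense in $L^2(X,E)$, so $\bigcup_iV_i$ is dense, which is what ``exhaustive'' means here.

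The heart of the matter is that each $V_i$ is $G$-invariant, so that $(R,V_i)$ is a subrepresentation. By construction $\text{Fil}_iV$ is stable under $K$ and under the derived Lie-algebra action $dR$ of $\mathfrak{g}$, but it is not manifestly stable under all of $G$; the bridge is analyticity. Fix $f\in\text{Fil}_iV$. By Proposition \ref{realanalytic}, applied to the finitely many generalized eigenspaces $V_{\tau,\lambda}$ whose sum contains $f$, the section $f$ is real-analytic on $G$; since $X=\Gamma\backslash G$ is compact the radii of convergence of its Taylor expansions are bounded below, and hence the orbit map $g\mapsto R(g)f$ is real-analytic from $G$ into $L^2(X,E)$---that is, $f$ is an analytic vector for $R$. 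Consequently, for $X\in\mathfrak{g}$ in a sufficiently small neighbourhood of $0$ one has the $L^2$-convergent expansion $R(\exp X)f=\sum_{n\geq 0}\frac{1}{n!}\,dR(X)^nf$. Each partial sum lies in the $\mathfrak{g}$-stable subspace $\text{Fil}_iV$, so $R(\exp X)f\in\overline{\text{Fil}_iV}=V_i$; as $R(\exp X)$ is bounded and $V_i$ is closed this yields $R(\exp X)V_i\subseteq V_i$ for all such $X$. Since $G$ is connected it is generated by the elements $\exp X$ with $X$ near $0$, hence $R(g)V_i\subseteq V_i$ for every $g\in G$, and applying this to $g^{-1}$ as well gives $R(g)V_i=V_i$.

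Next I would identify the graded pieces $V_i/V_{i-1}$, which is what makes the filtration useful for the trace formula. The tool is a small lemma: if $W\subseteq L^2(X,E)$ is a $K$-stable subspace consisting of $K$-finite vectors and each $K$-isotype $W(\tau)$ is finite-dimensional, then $\overline W$ meets the set of $K$-finite vectors of $L^2(X,E)$ in exactly $W$. Indeed, decompose along $\widehat{K}$; the isotypical projections are bounded and each $W(\tau)$ is finite-dimensional, hence closed, so $\overline W\cap L^2(X,E)(\tau)=\overline{W(\tau)}=W(\tau)$, and the claim follows by summing over $\tau$. Both $\text{Fil}_{i-1}V$ and $\text{Fil}_iV$ are admissible $(\mathfrak{g},K)$-modules (finite extensions of the admissible quotients from Proposition \ref{filpropo}), so the lemma applies: it gives $V_{i-1}\cap\text{Fil}_iV=\text{Fil}_{i-1}V$ and, using that $R|_K$ is unitary so that $V_{i-1}$ has a $K$-stable orthogonal complement in $V_i$, it identifies the $(\mathfrak{g},K)$-module of $K$-finite vectors of $V_i/V_{i-1}$ with $\text{Fil}_iV/\text{Fil}_{i-1}V$. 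Since the latter is admissible and irreducible, $(R,V_i/V_{i-1})$ is an admissible representation of $G$ with irreducible $(\mathfrak{g},K)$-module, hence topologically irreducible; and independence of the graded module from the chosen filtration is inherited from the corresponding assertion in Proposition \ref{filpropo}, since an admissible representation is by definition determined up to equivalence by its $(\mathfrak{g},K)$-module.

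The step I expect to be the real obstacle is the second one: upgrading $\mathfrak{g}$-stability of the algebraic piece $\text{Fil}_iV$ to $G$-stability of its $L^2$-closure $V_i$. For a general smooth vector this fails---a closed subspace all of whose Lie-algebra translates land in $\text{Fil}_iV$ need not be $R(G)$-invariant---and it is precisely the real-analyticity of the $K$- and $Z(\mathfrak{g}_{\mathbb{C}})$-finite sections recorded in Proposition \ref{realanalytic} that forces the Taylor series of the orbit map to converge back into the closure. The rest of the argument is bookkeeping on the earlier results.
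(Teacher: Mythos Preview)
Your proposal is correct and follows the same line as the paper: set $V_i=\overline{\text{Fil}_iV}$ and use the real-analyticity of $K$- and $Z(\mathfrak{g}_{\mathbb C})$-finite vectors (Proposition~\ref{realanalytic}) to upgrade $\mathfrak g$-stability of $\text{Fil}_iV$ to $G$-stability of its closure $V_i$. The paper phrases the analyticity step by pairing $R(g)f$ against an arbitrary $h\in V_i^\perp$ and Taylor-expanding the scalar matrix coefficient rather than invoking an $L^2$-convergent Taylor series of the orbit map as you do, and your third paragraph on the graded pieces goes beyond the stated proposition---the paper records that identification as a separate result immediately afterward.
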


\begin{proof}
  We let $V_i = \overline{\text{Fil}_iV}$ be the closure of $\text{Fil}_iV$ in $L^2(X,E)$. It is easy to see, that $V_i$ is stable under $G$. To show this, it is enough to prove $G\cdot\text{Fil}_iV\subset V_i$, since the representation is continuous. Let $h\in V_i^\perp$. For $f\in \text{Fil}_iV \subset V_{\text{fin}}$  the function
  \begin{equation*}
    g \mapsto (R(g)h,f)
  \end{equation*}
is real-analytic according to Proposition \ref{realanalytic}. For $X\in\mathfrak{g}$ on a sufficient small neighbourhood of 0, we have a Taylor expansion:
\begin{equation*}
  \begin{split}
    \left\langle R(\exp X)h,f \right\rangle &= \left.\sum_{n=0}^\infty\frac{1}{n!}X^n\left\langle R(g)h,f \right\rangle\right|_{g=1}\\
    &= \sum_{n=0}^\infty\frac{1}{n!}\left\langle X^nh,f \right\rangle.
  \end{split}
\end{equation*} 
Since $h\in \text{Fil}_iV$, hence also $X^nh\in\text{Fil}_iV$ we get $ \left\langle R(\exp X)h,f \right\rangle=0$. But then $\left\langle R(g)h,f \right\rangle=0$ in a neighbourhood of 1, and because of analycity, on the whole of $G$. Since $h\in \text{Fil}_iV$ and $f\in V_i^\perp$ were arbitrary we get $G\cdot\text{Fil}_iV\subset V_i$.
\end{proof}

For each $i\in\mathbb{N}$ we obtain the quotient representation on $V_i/V_{i-1}$, which we will denote by $R_i$. 

\begin{propo}
  For the $K$-finite vectors of $R_i$ we obtain
   \begin{equation*}
    \left(V_i/V_{i-1}\right)_{R_i,K} \cong \text{Fil}_iV/\text{Fil}_{i-1}V.
  \end{equation*}
\end{propo}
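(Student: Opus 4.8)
The plan is to compare $\tau$-isotypes on both sides, for each $\tau\in\widehat{K}$, exploiting that the filtration of $L^2(X,E)$ is obtained by taking $L^2$-closures of the $(\mathfrak{g},K)$-submodules $\text{Fil}_iV$ of $V_{\text{fin}}$, together with the fact that each $\text{Fil}_iV$ is itself admissible. First I would record this admissibility: by induction on $i$, the short exact sequences $0\to\text{Fil}_{i-1}V\to\text{Fil}_iV\to\text{Fil}_iV/\text{Fil}_{i-1}V\to 0$ and the exactness of $\tau$-isotype extraction on $(\mathfrak{g},K)$-modules (the $K$-action is locally finite and $K$ is compact, so $K$-modules are semisimple) give $\dim(\text{Fil}_iV)(\tau)=\dim(\text{Fil}_{i-1}V)(\tau)+\dim(\text{Fil}_iV/\text{Fil}_{i-1}V)(\tau)<\infty$, since the graded pieces are admissible by Proposition \ref{filpropo}. (Alternatively, each $\text{Fil}_iV$ is finitely generated over $U(\mathfrak{g}_\mathbb{C})$ by the construction in Proposition \ref{filpropo} and consists of $Z(\mathfrak{g}_\mathbb{C})$-finite vectors, so Proposition \ref{wallachprop} applies.) In particular each isotype $(\text{Fil}_iV)(\tau)=\text{Fil}_iV\cap L^2(X,E)(\tau)$ is finite-dimensional, hence already closed in $L^2(X,E)(\tau)$.

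Next I would identify the $\tau$-isotype of $V_i=\overline{\text{Fil}_iV}$. Since $\text{Fil}_iV\subset V_{\text{fin}}$ is $K$-stable and $R|_K$ is unitary, $\text{Fil}_iV$ is the algebraic direct sum of its pairwise orthogonal isotypes $(\text{Fil}_iV)(\tau)$; taking closures and using the previous paragraph,
\[
V_i=\widehat{\bigoplus}_{\tau\in\widehat{K}}\overline{(\text{Fil}_iV)(\tau)}=\widehat{\bigoplus}_{\tau\in\widehat{K}}(\text{Fil}_iV)(\tau),
\]
so that $V_i(\tau):=V_i\cap L^2(X,E)(\tau)=(\text{Fil}_iV)(\tau)$ for every $\tau$. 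Hence $V_{i-1}(\tau)\subset V_i(\tau)$, and since the quotient Hilbert space $V_i/V_{i-1}$ is, as a unitary $K$-representation, the Hilbert sum of the spaces $V_i(\tau)/V_{i-1}(\tau)$ (the orthogonal complement of $V_{i-1}$ in $V_i$ splits along isotypes), its $\tau$-isotype is the finite-dimensional space
\[
(V_i/V_{i-1})(\tau)=V_i(\tau)/V_{i-1}(\tau)=(\text{Fil}_iV)(\tau)/(\text{Fil}_{i-1}V)(\tau).
\]
Summing over $\tau$ shows in passing that $R_i$ is admissible, with $K$-finite part $\bigoplus_\tau(\text{Fil}_iV)(\tau)/(\text{Fil}_{i-1}V)(\tau)$.

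Finally I would exhibit the natural $(\mathfrak{g},K)$-map. Composing the inclusion $\text{Fil}_iV\hookrightarrow V_i$ (note $\text{Fil}_iV\subset\Gamma^\infty(X,E)$ consists of smooth vectors, whose images under the continuous projection are again smooth) with $V_i\to V_i/V_{i-1}$ yields a map $\text{Fil}_iV\to(V_i/V_{i-1})_{R_i,K}$ that is $K$-equivariant and intertwines the algebraic $\mathfrak{g}$-action on $\text{Fil}_iV$ with the differentiated action on the smooth vectors of $R_i$, since differentiation at the identity commutes with the projection. Its kernel is $\text{Fil}_iV\cap V_{i-1}$; decomposing an element of this intersection into $K$-isotypes and invoking $V_i(\tau)=(\text{Fil}_iV)(\tau)$ forces each component into $(\text{Fil}_{i-1}V)(\tau)$, so the kernel is exactly $\text{Fil}_{i-1}V$. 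The resulting injection $\text{Fil}_iV/\text{Fil}_{i-1}V\hookrightarrow(V_i/V_{i-1})_{R_i,K}$ is surjective because on each $\tau$-isotype it is the identification $(\text{Fil}_iV)(\tau)/(\text{Fil}_{i-1}V)(\tau)\to(V_i/V_{i-1})(\tau)$ computed above. The only step that is not bookkeeping is the finite-dimensionality of $(\text{Fil}_iV)(\tau)$, i.e. the admissibility of each $\text{Fil}_iV$ and not merely of its subquotients; this is precisely what guarantees that passing to the $L^2$-closure does not enlarge the $K$-finite part within any isotype.
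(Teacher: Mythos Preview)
Your proof is correct and follows the same underlying idea as the paper: compare $\tau$-isotypes, use that a dense subspace of a finite-dimensional space is the whole space, and sum over $\tau\in\widehat{K}$. The organizing principle, however, is slightly different. The paper simply asserts that $R_i$ is admissible, whence $(V_i/V_{i-1})(\tau)$ is finite-dimensional, and then observes that $(\text{Fil}_iV/\text{Fil}_{i-1}V)(\tau)$ sits densely inside it. You instead first establish the admissibility of $\text{Fil}_iV$ itself (by induction on $i$, or via Proposition~\ref{wallachprop}), so that $(\text{Fil}_iV)(\tau)$ is already closed and hence equals $V_i(\tau)$; from this the identification of the quotient isotypes is immediate. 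Your route has the advantage of being fully self-contained: the paper's statement ``$R_i$ is admissible'' is in fact a consequence of the proposition being proved rather than an independent input, and the paper also tacitly uses that the natural map $\text{Fil}_iV/\text{Fil}_{i-1}V\to V_i/V_{i-1}$ is injective (i.e.\ $\text{Fil}_iV\cap V_{i-1}=\text{Fil}_{i-1}V$), which you verify explicitly via the isotype computation. So the two arguments are the same in spirit, but yours makes the logical dependencies transparent.
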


\begin{proof}
  This is is clear, because $R_i$ is admissible, and so $(V_i/V_{i-1})(\tau)$ is finite dimensional, but 
  \begin{equation*}
    (\text{Fil}_iV/\text{Fil}_{i-1}V)(\tau) \subset (V_i/V_{i-1})(\tau)
    \end{equation*}
    is dense, and thus the both must be equal. 
\end{proof}

Together with the following theorem, it follows that the representations $R_i$ are irreducible:

\begin{satz}\label{adm}\emph{\cite[Theorem 3.4.12.]{wallach}}
  Let $(\pi,H)$ be an admissible Hilbert-space representation of G. Then $(\pi,H)$ is irreducible, iff the associated $(\mathfrak{g},K)$-module $H_{\pi,K}$ is irreducible. 
\end{satz}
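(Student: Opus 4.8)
The plan is to prove the two implications separately, channelling both through facts already present in the preceding development: the exactness of passage to a $\tau$-isotype $U \mapsto U(\tau)$ together with finite-dimensionality of $H(\tau)$ from admissibility, and the principle --- used in the proof of Proposition \ref{l2filtration} --- that the closure in $H$ of a $(\mathfrak{g},K)$-stable subspace is $G$-stable. I would also use the standard fact (Harish-Chandra) that $K$-finite vectors of an admissible representation are analytic vectors, so that their matrix coefficients $g \mapsto \langle \pi(g)w, v\rangle$ are real-analytic on all of $G$; this is the exact analogue of Proposition \ref{realanalytic}.

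For ``$\pi$ irreducible $\Rightarrow H_{\pi,K}$ irreducible'', I would take a nonzero $(\mathfrak{g},K)$-submodule $W \subseteq H_{\pi,K}$ and let $\overline{W}$ be its closure in $H$. Mimicking the proof of Proposition \ref{l2filtration}: for $w \in W$, $v \in \overline{W}^{\perp}$ and $X \in \mathfrak{g}$ near $0$, the Taylor expansion $\langle \pi(\exp X)w, v\rangle = \sum_{n \ge 0}\frac{1}{n!}\langle \pi(X)^n w, v\rangle$ vanishes termwise since $\pi(X)^n w \in W$; real-analyticity of $g \mapsto \langle \pi(g)w, v\rangle$ then forces it to vanish identically on the connected group $G$. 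Hence $\overline{W}$ is a nonzero closed $G$-invariant subspace, so $\overline{W}=H$; consequently $W(\tau)=W\cap H(\tau)$ is dense in the finite-dimensional space $H(\tau)$, hence equals it, and summing over $\widehat{K}$ gives $W = \bigoplus_{\tau}W(\tau) = \bigoplus_{\tau}H(\tau) = H_{\pi,K}$.

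For the converse, I would start with a nonzero closed $G$-invariant subspace $U \subseteq H$. Being closed and $K$-stable, $U = \widehat{\bigoplus}_{\tau}U(\tau)$ with $U(\tau) = U\cap H(\tau)$, so some $U(\tau)\neq 0$ and $U$ contains a nonzero $K$-finite (hence smooth) vector. Thus $U_{\pi,K}:=\bigoplus_{\tau}U(\tau)$ is a nonzero $(\mathfrak{g},K)$-submodule of $H_{\pi,K}$: it is visibly $K$-stable, and $\mathfrak{g}$-stable because $U$ is $G$-stable and the $\mathfrak{g}$-action preserves smoothness and $K$-finiteness. Irreducibility of $H_{\pi,K}$ forces $U_{\pi,K} = H_{\pi,K}$, and since $H_{\pi,K}$ is dense in $H$ by the $K$-isotypical decomposition, $U \supseteq \overline{H_{\pi,K}} = H$, i.e. $U=H$.

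The main obstacle is the opening step of the forward direction --- promoting $\mathfrak{g}$-invariance of a non-closed subspace to $G$-invariance of its closure. This rests entirely on the real-analyticity of the matrix coefficients of $K$-finite vectors (ultimately ellipticity of the Casimir), which is precisely the mechanism already packaged in Propositions \ref{realanalytic} and \ref{l2filtration}, so in practice one only quotes it. Everything else --- exactness of $(-)(\tau)$, finiteness of the isotypes, density of the $K$-finite vectors --- is immediate from admissibility and the preceding results.
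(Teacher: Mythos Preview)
The paper does not supply its own proof of this statement: Theorem~\ref{adm} is simply quoted from Wallach \cite[Theorem~3.4.12]{wallach} and used as a black box, so there is no in-paper argument to compare against. Your proposal is a correct and self-contained proof, and in fact it is essentially the standard one (and close to Wallach's): the forward direction hinges on the analyticity of $K$-finite matrix coefficients to pass from $(\mathfrak{g},K)$-stability to $G$-stability of the closure, and the converse on the density of $K$-finite vectors together with finite-dimensionality of the isotypes. Nothing is missing; if anything, you have supplied more than the paper does.
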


\section{The trace formula}

\begin{defn}
  Let $(H,\left\langle\cdot , \cdot\right\rangle)$ be a Hilbert-space. A compact operator $T$ is of \emph{trace class}, if $\sum_is_i(T)<\infty$, where $s_i(T)$ denote the singular values of the operator $T$. Let $(e_i)_{i\in I}$ be an orthonormal basis of $H$. The \emph{trace} of a trace class operator $T$ is defined as 
  \begin{equation*}
    \operatorname{tr}(T) = \sum_{i\in I}\langle Te_i, e_i\rangle.
  \end{equation*}
 One can show that this sum converges absolutely and is independent of the chosen orthonormal basis $(e_i)_{i\in I}$ (\cite[Theorem 5.3.5]{prohadeitmar}). \\
 
 Now let $(\pi,H)$ be an admissible representation of $G$. The representation is said to be of \emph{trace class} if for each $f\in C^\infty_c(G)$ the operator
 \begin{equation*}
   \pi(f) = \int_G f(g)\pi(g)\;dg
 \end{equation*}
 is of trace class. 
\end{defn}

\begin{satz}\label{traceformula}
  Let $f\in C_c^\infty(G)$. The operator $R(f)$ on $L^2(X,E)$ is an integral operator with integral kernel
  \begin{equation*}
    k_f(x,y)= \sum_{\gamma\in\Gamma}f(x^{-1}\gamma y)\chi(\gamma). 
  \end{equation*}
  Thus $R(f)$ is of trace class and the following trace formula holds
    \begin{equation*}
     \sum_{[\gamma]}\vol(\Gamma_\gamma\backslash G_\gamma)\mathcal{O}_\gamma(f)\tr\;\chi(\gamma)
 = \sum_{\pi\in\widehat{G}_{\text{adm}}} N_{\Gamma,\chi}(\pi)\tr\;\pi(f),
  \end{equation*}
  where on the left hand side we sum over all conjugacy classes $[\gamma]$ of $\Gamma$ and $\mathcal{O}_\gamma(f)$ is the orbital integral
  \begin{equation*}
    \mathcal{O}_\gamma(f)=\int_{G_\gamma\backslash G} f(x^{-1}\gamma x)\;dx.
  \end{equation*}
 The natural number $N_{\Gamma,\chi}(\pi)$ denotes the multiplicity of the admissible and irreducible representation $\pi$ in $\widehat{\bigoplus}_{i=0}^\infty V_i/V_{i-1}$. 
\end{satz}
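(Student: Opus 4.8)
The plan is to mimic the classical derivation of the Selberg trace formula: compute $\tr R(f)$ in two ways, geometrically from the integral kernel and spectrally from the filtration of Proposition~\ref{l2filtration}, and then equate.

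\textbf{Kernel and trace class.} For $\varphi\in C^\infty(G,V_\chi)^\Gamma$ I would first rewrite $R(f)\varphi(x)=\int_G f(g)\varphi(xg)\,dg=\int_G f(x^{-1}y)\varphi(y)\,dy$ (substituting $y=xg$; recall $dg$ is bi-invariant) and then fold the integral over $G$ into one over $X=\Gamma\backslash G$ using $\int_G=\sum_{\gamma\in\Gamma}\int_X(\gamma\cdot)$ together with $\varphi(\gamma y)=\chi(\gamma)\varphi(y)$. Since $f$ has compact support and $\Gamma$ is discrete, for $x,y$ in a compact set only finitely many $\gamma$ contribute, so summation and integration may be interchanged, giving $R(f)\varphi(x)=\int_X k_f(x,y)\varphi(y)\,dy$ with $k_f$ as stated; a short computation gives $k_f(\gamma x,y)=\chi(\gamma)k_f(x,y)$ and $k_f(x,\gamma y)=k_f(x,y)\chi(\gamma)^{-1}$, so $k_f$ is a smooth section of $E\boxtimes E^*$ over $X\times X$. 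For trace class I would invoke that every $f\in C_c^\infty(G)$ is a finite sum of convolutions $\sum_k f_{1,k}*f_{2,k}$ with $f_{j,k}\in C_c^\infty(G)$ (Dixmier--Malliavin), whence $R(f)=\sum_k R(f_{1,k})R(f_{2,k})$; each $R(f_{j,k})$ has a continuous, hence (by compactness of $X$) square-integrable, kernel, so it is Hilbert--Schmidt, and a finite sum of products of Hilbert--Schmidt operators is trace class. The trace of such an operator equals $\int_X\tr_{E_x}k_f(x,x)\,dx$, and $\tr_{E_x}k_f(x,x)=\sum_{\gamma\in\Gamma}f(x^{-1}\gamma x)\tr\chi(\gamma)$, the trace being taken after identifying $E_x$ with $V_\chi$ via a representative and independent of that choice.

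\textbf{Geometric side.} Starting from $\tr R(f)=\int_{\Gamma\backslash G}\sum_{\gamma\in\Gamma}f(x^{-1}\gamma x)\tr\chi(\gamma)\,dx$, I would reorganize the $\gamma$-sum by $\Gamma$-conjugacy classes. Writing $\gamma=\delta^{-1}\gamma_0\delta$ with $\delta$ running over $\Gamma_{\gamma_0}\backslash\Gamma$ and using $\tr\chi(\delta^{-1}\gamma_0\delta)=\tr\chi(\gamma_0)$ — conjugation invariance of the trace, valid for the not-necessarily-unitary $\chi$ precisely because $\chi$ is a group homomorphism — the $\delta$-sum unfolds the integral over $\Gamma\backslash G$ to one over $\Gamma_{\gamma_0}\backslash G$, and then $\int_{\Gamma_{\gamma_0}\backslash G}f(x^{-1}\gamma_0 x)\,dx=\vol(\Gamma_{\gamma_0}\backslash G_{\gamma_0})\,\mathcal O_{\gamma_0}(f)$, the volume being finite because $\Gamma_{\gamma_0}$ is a uniform lattice in $G_{\gamma_0}$ (standard, as $\Gamma$ is torsion-free and uniform, so every $\gamma$ is semisimple). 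This gives $\tr R(f)=\sum_{[\gamma]}\vol(\Gamma_\gamma\backslash G_\gamma)\mathcal O_\gamma(f)\tr\chi(\gamma)$; this step is identical to the unitary case, all interchanges being justified as above.

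\textbf{Spectral side and main obstacle.} Here I would use the filtration $0=V_0\subset V_1\subset\cdots$ with $\overline{\bigcup_i V_i}=L^2(X,E)$ and each $R_i$ on $V_i/V_{i-1}$ admissible and irreducible (Propositions~\ref{filpropo} and~\ref{l2filtration}, Theorem~\ref{adm}). Set $W_i:=V_i\ominus V_{i-1}$, so $V_i=\bigoplus_{j\le i}W_j$ orthogonally and $R_i(f)$ is identified with $w\mapsto P_{W_i}(R(f)w)$ on $W_i$. The delicate point — and the reason the unitary argument does not transfer verbatim — is that there is no orthogonal decomposition of $L^2(X,E)$, only this filtration, so one must (a) prove that the trace is additive along an \emph{infinite} filtration and (b) establish absolute convergence and finiteness of the multiplicities without the Plancherel-type input of the unitary case. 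For (a): with $P_n$ the orthogonal projection onto $V_n$, invariance of $V_n$ under $R(f)$ gives $P_nR(f)P_n=(R(f)|_{V_n})\oplus 0$, whose trace is $\sum_{i=1}^n\tr R_i(f)$ upon choosing an orthonormal basis of $V_n$ adapted to the $W_i$; since $P_n\to\Id$ strongly and $R(f)$ is trace class, $\|P_nR(f)P_n-R(f)\|_1\to 0$, hence $\tr R(f)=\lim_n\sum_{i=1}^n\tr R_i(f)$. For (b): since $\bigsqcup_i(\text{orthonormal basis of }W_i)$ is an orthonormal basis of $L^2(X,E)$ one gets $\sum_i\|R_i(g)\|_2^2\le\|R(g)\|_2^2<\infty$ for every $g\in C_c^\infty(G)$, and applying this to the factorization $f=\sum_k f_{1,k}*f_{2,k}$, together with $|\tr(AB)|\le\|A\|_2\|B\|_2$ and Cauchy--Schwarz, yields $\sum_i|\tr R_i(f)|<\infty$. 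Each $R_i$ defines a class $\pi\in\widehat G_{\text{adm}}$ with $R_i(f)$ trace class and $\tr R_i(f)=\tr\pi(f)$, because the global character of an admissible representation depends only on its underlying $(\mathfrak g,K)$-module. Regrouping the absolutely convergent series $\sum_i\tr R_i(f)$ by the class of $R_i$ then gives $\sum_{\pi\in\widehat G_{\text{adm}}}N_{\Gamma,\chi}(\pi)\tr\pi(f)$; finiteness of each $N_{\Gamma,\chi}(\pi)$ follows, since otherwise $\sum_i|\tr R_i(f)|$ would diverge for any $f$ with $\tr\pi(f)\neq 0$ — alternatively, every copy of $\pi$ forces $\Omega$ to act by the scalar $\chi_\pi(\Omega)=:\mu$ on a fixed $K$-type $\tau_0$ of $\pi$, so $N_{\Gamma,\chi}(\pi)\le\dim V_{\tau_0,\mu}<\infty$ by Proposition~\ref{casimir}. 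Equating the two expressions for $\tr R(f)$ yields the trace formula; the kernel computation and the geometric side are then entirely routine, and the whole weight of the proof sits in the additivity and convergence analysis above, which is resolved by the Hilbert--Schmidt factorization together with the closedness of the $V_i$.
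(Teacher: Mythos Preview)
Your proof is correct and follows essentially the same strategy as the paper: compute the kernel of $R(f)$, integrate it along the diagonal and reorganize by conjugacy classes for the geometric side, then invoke the filtration of Proposition~\ref{l2filtration} for the spectral side. Your treatment is in fact considerably more careful than the paper's own argument: you justify the trace-class property via the Dixmier--Malliavin factorization, prove rigorously that the trace is additive along the infinite filtration using $\|P_nR(f)P_n-R(f)\|_1\to 0$, establish absolute convergence of $\sum_i\tr R_i(f)$ and finiteness of each $N_{\Gamma,\chi}(\pi)$, and verify that $\tr R_i(f)$ depends only on the infinitesimal equivalence class---all points the paper either asserts without proof or leaves implicit.
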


\begin{proof}
  Let $h\in C_c(G)$, such that $\sum_{\gamma\in\Gamma} h(\gamma x)= 1$. 
For $\varphi\in L^2(X,E)$ we compute
\begin{equation*}
  \begin{split}
    R(f)\varphi(x) &= \int_G f(y)\varphi(xy)\;dy\\
                             &= \int_G f(x^{-1}y)\varphi(y)\;dy\\
                             &= \int_G \sum_{\gamma\in\Gamma}h(\gamma^{-1}y)f(x^{-1}y)\varphi(y)\;dy\\
                             &=  \sum_{\gamma\in\Gamma}\int h(y)f(x^{-1}\gamma y)\varphi(\gamma y)\;dy\\
                             &= \int_G h(y)\sum_{\gamma\in\Gamma}f(x^{-1}\gamma y)\chi(\gamma)\varphi(y)\;dy\\
                             &= \int_{\Gamma\backslash G}\sum_{\gamma '\in\Gamma} h(\gamma 'y)\sum_{\gamma\in\Gamma}f(x^{-1}\gamma\gamma ' y)\chi(\gamma)\varphi(\gamma 'y)\;dy\\
                             &= \int_{\Gamma\backslash G}\sum_{\gamma\in\Gamma}f(x^{-1}\gamma y)\chi(\gamma)\varphi(y)\;dy.
                               \end{split}
\end{equation*}

This computation shows that $R(f)$ is an integral operator with integral kernel
\begin{equation*}
k_f(x,y)=\sum_{\gamma\in\Gamma}f(x^{-1}\gamma y)\chi(\gamma).
\end{equation*}
Then we can compute the trace of $R(f)$ by integrating the kernel along the diagonal:

\begin{equation*}
  \tr\; R(f) = \int_X\sum_{\gamma\in\Gamma}f(x^{-1}\gamma x)\tr\; \chi(\gamma)\;dx. 
\end{equation*}

Breaking the integration up into the different conjugacy classes of $G$ we get
\begin{equation*}
  \begin{split}
     \tr R(f) &= \int_{\Gamma\backslash G} \sum_{\gamma '\in\Gamma}h(\gamma ' x)\sum_{\gamma \in\Gamma}f(x^{-1}\gamma x)\tr\;\chi(\gamma)\; dx\\
                    &= h(x)\int_{G} \sum_{\gamma\in\Gamma}f(x^{-1}\gamma x)\tr\;\chi(\gamma)\;dx\\
                    &= \sum_{\gamma\in\Gamma}\int_{G}h(x) f(x^{-1}\gamma x)\tr\;\chi(\gamma)\;dx\\
                    &= \sum_{[\gamma]}\sum_{\sigma\in\Gamma_\gamma\backslash\Gamma}\int_{G}h(\sigma^{-1}x) f( x^{-1}\gamma  x)\tr\;\chi(\gamma)\;dx\\
                    &= \sum_{[\gamma]}\int_{\Gamma_\gamma\backslash G}\sum_{\sigma\in\Gamma_\gamma\backslash\Gamma}\sum_{\eta\in\Gamma_\gamma}h(\sigma^{-1}\eta x) f( x^{-1}\gamma x)\tr\;\chi(\gamma)\;dx\\
                    &= \sum_{[\gamma]}\int_{\Gamma_\gamma\backslash G}f( x^{-1}\gamma x)\tr\;\chi(\gamma)\;dx\\
                     &= \sum_{[\gamma]}\int_{G_\gamma\backslash G}\int_{\Gamma_\gamma\backslash G_\gamma}f( (\sigma x)^{-1}\gamma \sigma x)\tr\;\chi(\gamma)\;d\sigma\;dx\\
                     &= \dim(V_\chi)\text{vol}(\Gamma\backslash G) + \sum_{[\gamma]\neq [1]}\text{vol}(\Gamma_\gamma\backslash G_\gamma)\mathcal{O}_\gamma(f)\tr\;\chi(\gamma).
                        \end{split}
\end{equation*}

On the other hand, according to Proposition \ref{l2filtration} we have a filtration of the representation space $L^2(X,E)$
\begin{equation*}
  0 = V_0 \subset V_1\subset\dots \subset \bigcup_{i=0}^\infty V_i = L^2(X,E).
\end{equation*}
Thus, when $R_i$ is the by $R$ induced representation on $V_i/V_{i-1}$ we get
\begin{equation*}
  \tr\; R(f) = \sum_{i=0}^\infty \tr\; R_i(f),
\end{equation*}
but the right-hand term is obviously equal to 
\begin{equation*}
  \sum_{\pi\in\widehat{G}_{\text{adm}}}N_{\Gamma,\chi}(\pi)\tr\; \pi. 
\end{equation*}

\end{proof}

\bibliographystyle{plain}
\bibliography{nonunitary}
\Addresses
\end{document}